\definecolor{myblue}{rgb}{.97, .97, .97}
\newtheorem{theorem}{Theorem}
\newtheorem{lemma}{Lemma}
\newtheorem{proposition}{Proposition}[section]
\newtheorem{corollary}{Corollary}
\newtheorem{definition}{Definition}
\newcommand{\norm}[1]{\left\Vert#1\right\Vert}
\newcommand{\tr}[1]{\mathrm{tr} #1}
\newcommand{\ip}[1]{\left \langle #1 \right \rangle }
\newcommand{\A}{ {\mathcal{A}}}
\newcommand{\OO}{\mathcal{O}}
\newcommand {\1}{\mathrm{\textbf{1}}}
\newcommand{\one}{\1}
\def \S{\mathcal{S}} 
\renewcommand{\a}{\mathbf{a}}
\renewcommand{\b}{\mathbf{b}}
\newcommand{\fnorm}[1]{\norm{#1}_F}
\DeclareMathOperator{\rank}{rank}
\DeclareMathOperator{\conv}{conv}
\newcommand{\eg}{{\it e.g.}}
\newcommand{\perm}{\Pi}
\renewcommand{\vec}{\mathrm{vec}}
\newcommand{\RR}{\mathbb{R}}
\newcommand{\N}{\mathcal{N}}
\newcommand{\ZZ}{\mathbb{Z}}
\newcommand{\Iso}{\mathrm{ISO}}
\newcommand{\Aut}{\mathrm{Aut}}
\newcommand{\CIso}{\mathrm{ISO}_\mathrm{conv}}
\newcommand{\CAut}{\mathrm{Aut}_{\mathrm{conv}}}
\newcommand{\aff}{\mathrm{aff}}
\newcommand{\affAut}{\mathrm{Aut}_{\aff}}
\newcommand{\V}{\mathcal{V}}
\newcommand{\DS}{\text{DS}}
\newcommand{\DSpp}{DS++}
\newcommand{\lambdaMin}{\lambda_{\mathrm{min}}}
\newcommand{\lambdaMax}{\lambda_{\mathrm{max}}}
\newcommand{\Sc}{S_c}
\newcommand{\showfontsize}{\f@size{} pt}
\title{Exact Recovery with Symmetries for the Doubly-Stochastic Relaxation}
\author{Nadav Dym}
\affil{Weizmann Institute of Science}
\begin{document}
\maketitle

\begin{abstract}
Graph matching or quadratic assignment, is the problem of labeling the vertices of two graphs so that they are as similar as possible. A common method for approximately solving the NP-hard graph matching problem is relaxing it to a convex optimization problem over the set of doubly stochastic (DS) matrices. Recent analysis has shown that for almost all pairs of isomorphic and asymmetric graphs, the DS relaxation succeeds in correctly retrieving the isomorphism between the graphs. Our goal in this paper is to analyze the case of symmetric isomorphic graphs. This goal is motivated by shape matching applications where the graphs of interest usually have reflective symmetry. 

For symmetric problems the graph matching problem has multiple isomorphisms and so convex relaxations admit all convex combinations of these isomorphisms as viable solutions. If the convex relaxation does not admit any additional superfluous solution we say that it is convex exact.  

 We show that convex exactness depends strongly on the symmetry group of the graphs; For a fixed symmetry group $G$, either the DS relaxation will be convex exact for almost all pairs of isomorphic graphs with symmetry group $G$, or the DS relaxation will fail for \emph{all} such pairs. We show that for reflective groups with at least one full orbit convex exactness holds almost everywhere, and provide some simple examples of non-reflective symmetry groups for which convex exactness always fails. 

When convex exactness holds, the isomorphisms of the graphs are the extreme points of the convex solution set. We suggest an efficient algorithm for retrieving an isomorphism in this case. We also show that the "convex to concave" projection method will also retrieve an isomorphism in this case, and show experimentally that this projection method as well as the standard Euclidean projection will succeed in retrieving an isomorphism for near isomorphic graphs as well.

In certain cases it is sufficient to find the centroid of the set of isomorphisms, which gives a "fuzzy encoding" of the symmetries of the shape. We show that for \emph{any} symmetry group $G$, the centroid solution can be recovered efficiently for almost all pairs of isomorphic graphs with symmetry group $G$. Additionally we show that for such isomorphic graphs  interior-point solvers will generally return the centroid solution.  
\end{abstract}

\section{Introduction}
\emph{Graph matching} and \emph{graph isomorphism} are classical problems in computer science. In this paper we will use the term \emph{graph} for a pair $(\a,A) $, where $\a=(\a_1,\ldots,\a_n) $ are the vertices of the graph, and $A$ is 
a symmetric matrix encoding the relationship between the vertices. We will also sometimes refer to $A$ alone as a graph. An \emph{isomorphism} between graphs $(\a,A) $ and $(\b,B)$ is a relabeling of the vertices of $B$ so that $A$ and the relabeled $B$ are identical. The graphs $A$ and $B$ are \emph{isomorphic} if there is an isomorphism between them. In matrix notation, an isomorphism is a permutation matrix $P $ such that $A=PBP^T $ or equivalently $AP=PB $. The problem of deciding whether two graphs are isomorphic is known as the \emph{Graph isomorphism problem} (GI). It is not known to be in P, but is also not known to be NP-hard. Recently \cite{babai2016graph} provided a quasi-polynomial time algorithm for GI. While no polynomial algorithm for the general GI problem is known, there are many families of graphs for which GI can be solved in polynomial time. One example which is relevant for this work is graphs with simple spectrum, or more generally bounded eigenvalue multiplicity \cite{babai1982isomorphism}.

The \emph{graph matching} problem is the problem of determining how close two graphs are to being isomorphic by minimizing the graph matching energy over the set of permutation matrices which we denote by $\Pi_n$:
\begin{equation}\label{e:QAP}
\min_{P \in \Pi_n} E(P)=\norm{AP-PB}_F. 
\end{equation}
 This optimization problem is also often referred to as the  Koopmans-Beckmann \emph{quadratic assignment} problem, and is usually phrased as the equivalent problem of maximizing $\tr{APB^TP^T} $. In contrast to GI whose computational status is not fully known, global minimization of quadratic assignment, and even approximation to within a constant factor, is known to be NP-hard \cite{QAPnpHardToApprox}.

 Graph matching problems have found many applications. See for example \cite{conte2004thirty} for a survey on applications of graph matching for pattern recognition. Our work is motivated by shape matching applications: Shape matching  is the problem of measuring how similar two given surfaces $\S_A,\S_B $ are. The notion of similarity between shapes is required to be invariant to shape preserving deformations  such as rigid transformations for rigid objects (\eg, chairs), and deformations which preserve geodesic distances for non-rigid objects (\eg, humans). Accordingly shape matching problems are often modeled (\eg, \cite{memoli2007use,memoli2011gromov,Justin}) as the problem of finding a mapping between two surfaces $\S_A,\S_B $ so that they are as isometric as possible.  The metric on the shapes is typically either the extrinsic Euclidean metric for rigid shapes, or the intrinsic geodesic metric for non-rigid shapes. 
 
Finding near-isometries between shapes can be phrased as a graph matching problem by selecting a finite sampling of the shapes to obtain vertices $\a,\b$ on the two shapes, and taking $A,B$ to be the distance matrices defined by the distances on the shapes, that is 
 $$A_{ij}=d_A(\a_i,\a_j), \quad B_{ij}=d_B(\b_i,\b_j) $$
 In this setting an isomorphism between $A$ and $B$ corresponds to an isometry between the sampled metric spaces.

\begin{figure}[t]
	\centering
	\includegraphics[width=\columnwidth]{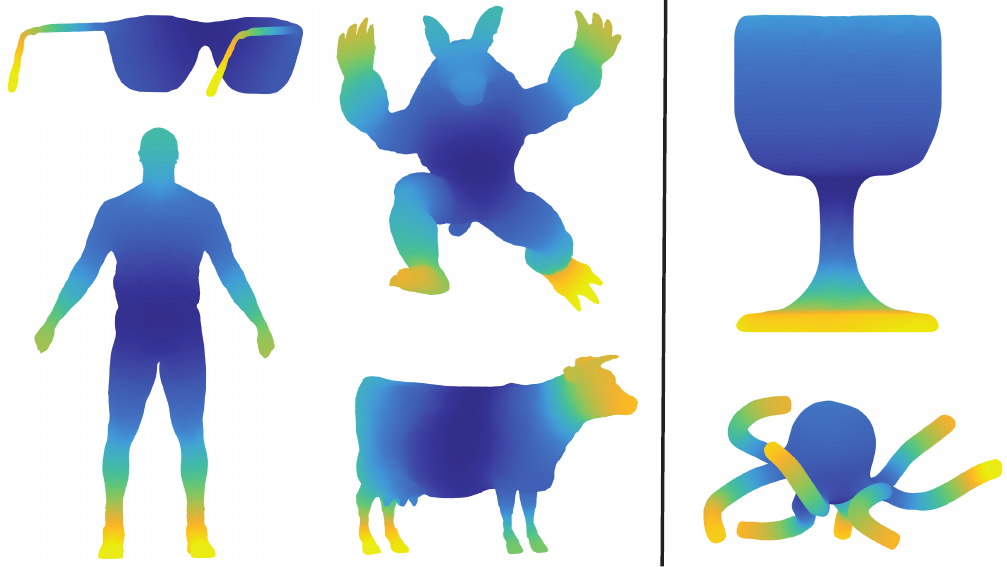}
	\vspace{+0cm}
	\caption{ \small Shapes with bilateral symmetry (left) and with non reflective symmetry groups (right) from the SHREC dataset \protect{\cite{giorgi2007shape}}. Our results suggest the DS relaxation is appropriate for solving problems with bilateral symmetry. For problems with non-reflective symmetries our results are non-conclusive (see Figure~\ref{fig:failures}).  }
	\label{fig:shapes}
	\vspace{-0.1 cm}
\end{figure}

In this work we will focus on symmetric graphs, which are very relevant for shape matching applications since most natural shapes have intrinsic symmetries- that is, intrinsic isometries from the shape to itself other than the trivial identity mapping.  Figure~\ref{fig:shapes} shows some representative shapes from the \cite{giorgi2007shape} shape matching dataset. Typically natural shapes have a symmetry group with only two elements (bilateral symmetry) as in the left hand side of Figure~\ref{fig:shapes}, but there are interesting examples with larger symmetry groups as in the right hand side of Figure~\ref{fig:shapes}.

 \paragraph{The doubly-stochastic relaxation} In this paper we focus on analyzing the doubly-stochastic (DS) relaxation for graph matching.
 For a survey on other convex relaxation and combinatorial methods which have been proposed to achieve good solutions for quadratic assignment see \cite{QAPsurvey}. 

The doubly stochastic (\DS) relaxation replaces the NP hard graph matching problem with a tractable optimization problem by relaxing the combinatorial set of permutations to its convex hull of doubly stochastic matrices:
$$\DS=\{S|\quad S \one =\one, \quad \one^TS=\one^T, \quad  S \geq 0\}, $$  
which leads to a convex quadratic program known as the \emph{DS relaxation}:
\begin{equation} \label{e:ds}
\min_{S \in \DS} E(S)=\norm{AS-SB}_F .
\end{equation}
We will refer to this optimization problem as $\DS(A,B) $. Since the \DS~ relaxation minimizes $E(\cdot)$ over a larger domain, its minimum value is a lower bound for the minimal value of the graph matching problem. As can be expected due to the hardness of the problem, the \DS~ relaxation does not generally return the global minimum or minimizer of \eqref{e:QAP} \cite{GMrisk}. In particular, \cite{fractional} characterizes all cases in which the minimum of \eqref{e:ds} is zero even when the graphs are not isomorphic. 

We will be interested in the case where $A$ and $B$ \emph{are} isomorphic. Note that in this case the global minimum of \eqref{e:ds} is zero and thus coincides with the global minimum of \eqref{e:QAP}. The interesting question is whether the \DS~relaxation succeeds in returning a minimizer which is a permutation. Clearly we do not expect this will be the case for all graphs since this would provide us with a polynomial time algorithm to solve GI. On the other hand, since there are many families of graphs for which GI is tractable, we can hope that for many instances the DS relaxation will be successful in returning a permutation solution.  The recent works of \cite{Aflalo,fiori2015spectral} show that indeed this is the case. To state their results we introduce some notation:

Let us denote the set of isomorphisms of $A,B $ by $\Iso(A,B) $. We will say that $S \in \DS$ is a \emph{convex isomorphism} if it is a member of the set
$$\CIso(A,B)=\{S \in \DS| \quad AS=SB \}. $$
The inclusion
\begin{equation}
\label{e:inclusion}
\Iso(A,B) \subseteq \CIso(A,B)
\end{equation}
is obvious. However it is possible that the \DS~relaxation will contain additional minimizers. We will say that $\DS(A,B)$ is \emph{exact} when this possibility does not occur and
\begin{equation*}
\CIso(A,B)=\Iso(A,B).
\end{equation*}
We note that the exactness property depends only on $A$: An isomorphism $P \in \Iso(A,B) $ defines a linear bijection 
$$S \mapsto SP^T $$ 
from $\CIso(A,B) $ to $\CIso(A,A)$ and from $\Iso(A,B) $ to $\Iso(A,A)$. Accordingly if $A,B$ are isomorphic, $\DS(A,B)$ is exact if and only if $\DS(A,A) $ is exact. We will refer to (convex) isomorphisms in the case $A=B $ as (convex) \emph{automorphisms}. We also denote:
 $$\Aut(A)=\Iso(A,A), \quad  \CAut(A)=\CIso(A,A), \quad  \DS(A)=\DS(A,A). $$
   We say that $A$ is an \emph{asymmetric} graph if the identity matrix is its only automorphism. Otherwise we say that $A$ is a \emph{symmetric} graph.
  A necessary condition for exactness of $\DS(A)$ is that $A $ is asymmetric. This is because if $A$ has several automorphisms then due to the inclusion \eqref{e:inclusion} and the convexity of $\CAut(A)$
  \begin{equation}\label{e:convSubseteq}
  \conv \Aut(A) \subseteq \CAut(A).
  \end{equation}
  Thus, while $A$ has a finite number of automorphisms, it has an infinite number of convex automorphisms.
Even when $A$ is asymmetric, exactness does not always occur. A simple counter example will be discussed in Section~\ref{sec:weak}. However, \cite{Aflalo} showed that for asymmetric $A$ satisfying certain weak conditions exactness will hold. Their result was later shown to hold with even weaker conditions in \cite{fiori2015spectral}. 

\paragraph{Convex exactness}
Our goal in this paper is to show that for certain kinds of symmetry groups the DS relaxation can still be successfully applied, by defining a suitable notion of \emph{convex exactness}. A similar goal has recently been achieved by \cite{PMexact} for a semi-definite programming relaxation of the Procrustes matching problem.

 We say that $\DS(A) $ is \emph{convex exact} if equality holds in \eqref{e:convSubseteq}, or equivalently if for any $B$ isomorphic to $A$,
\begin{equation}\label{e:convexExact}
\conv \Iso(A,B)=\CIso(A,B).
\end{equation}
Note that for asymmetric graphs, convex exactness and exactness coincide. When convex exactness holds an isomorphism can be extracted in a tractable manner as we will discuss in Section~\ref{sec:iso}. 

For every permutation subgroup $G \leq \Pi_n $ we define
$$\A(G)=\{A \in \S^n| \quad \Aut(A)=G \}. $$
In the asymmetric case $G=\{I_n \} $ we know that there are $A \in \A(G) $ such that $\DS(A)$ is not (convex) exact, but also that (convex) exactness often does hold for asymmetric graphs. Our goal is to give a more precise notion of this claim by showing that for almost all asymmetric graphs (convex) exactness holds. More importantly, we would like to find non-trivial groups $G$ for which  $\DS(A) $ will be convex exact for almost every $A \in \A(G) $.  
To do so we must first define a natural measure $\mu_G$ on $\A(G)$. 

We will assume that $\A(G)$ is non-empty. Permutation groups $G \leq \perm_n $, for which $\A(G) $ is empty do exist. A simple example is the cyclic group $G \leq \perm_3 $ generated by the permutation 
$$
\a_1 \mapsto \a_2, \quad \a_2 \mapsto \a_3, \quad \a_3 \mapsto \a_1.
$$
 Any $A \in \A(G)  $ satisfies
$$A_{11}=A_{22}=A_{33} \text{ and } A_{12}=A_{23}=A_{31}. $$ 
Thus, all diagonal elements of $A$ are identical and all off-diagonal elements of $A$ are identical as well since $A=A^T $. It follows that $\Iso(A)=\perm_3 $. If $\A(G)$ is non-empty we say that $G$ is a \emph{symmetry group}.

For a symmetry group $G$ we consider the vector space
$$\V(G)=\{A \in \S^n| \quad A=P^TAP \text{ for all  } P \in G \}. $$
Since $\V(G)$ is a  vector space of some dimension $d$ it has a natural notion of measure- the $d$ dimensional Hausdorff measure on $\RR^{n \times n} $ restricted to $\V(G)$, or equivalently  the push forward of the Lebesgue measure on $\RR^d$ to $\V(G) $ via a  linear isometry between the two spaces. We denote this measure by $\mu_G$.  Note that
$$\A(G)=\V(G) \setminus  \bigcup_{G \subsetneq H \leq \perm_n} \V(H). $$ 
Since by assumption $\A(G)$ is non-empty it follows that all the $\V(H) $ are strict subspaces of $\V(G) $ and therefore the complement of $\A(G)$ in $\V(G) $ has measure zero. Thus $\mu_G$ is a natural choice for a measure on $\A(G)$.  We will say that a property is \emph{generic}, or that it holds for almost every $A \in \A(G)$, if it holds for $\mu_G $ almost every $A \in \A(G)$.

We can now state our main results:

\subsection{Main results} 
\paragraph{Reflective groups} We show that convex exactness is a generic property for groups $G$ fulfilling the following two conditions:
\begin{definition}
We say that $G \leq \perm_n $ is a \emph{reflection group} if $P^2=I_n $ for all $P \in G $.
\end{definition}
Any group $G \leq \perm_n $ defines an action $(\sigma,\a_j) \mapsto \a_{\sigma(j)} $  on the set of vertices $\a$. We denote the orbit of $\a_j$ by $[\a_j]$. In general we have that $|[\a_j]| \leq |G| $.
\begin{definition}
	We say that $G$ has a full orbit if it has an orbit of length $|G|$.
\end{definition}
In shape matching applications the full orbit assumption is typically fulfilled; an orbit $[\a_i]$ will be full unless $\a_i$ is on a symmetry axis of the shape.
Under the full orbit and reflection group assumption, we prove:
\begin{theorem}\label{th:main}
Assume $G \leq \perm_n $ is a reflection group with a full orbit. Then the DS relaxation is convex exact with respect to almost all $A \in \A(G) $.
\end{theorem}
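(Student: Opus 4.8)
The plan is to reduce the claim to a statement about $\CAut(A)$ and then exploit the representation theory of $G$. By the reduction recorded just before the theorem, convex exactness of $\DS(A)$ is the equality $\conv\Aut(A)=\CAut(A)$, and since $A\in\A(G)$ satisfies $\Aut(A)=G$, it suffices to prove $\CAut(A)=\conv G$ for $\mu_G$-almost every $A\in\V(G)$. A reflection group has $P^2=I$ for all $P$, so $G$ is an elementary abelian $2$-group and every irreducible character $\chi\in\hat G$ is real and $\{\pm1\}$-valued. Writing $\RR^n=\bigoplus_\chi W_\chi$ for the isotypic decomposition of the permutation representation, each $A\in\V(G)$ preserves every $W_\chi$ and acts there as an arbitrary symmetric operator, independently across blocks; hence for $A$ off a measure-zero set each block $A|_{W_\chi}$ has simple spectrum and the eigenvalues are distinct across blocks, so $A$ has simple spectrum overall. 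For such $A$ the commutant of $A$ is the commutative algebra $\RR[A]$, it contains $G$, and therefore every $S\in\CAut(A)$ commutes with $G$, preserves each $W_\chi$, and is diagonal in the eigenbasis of $A$ on each block. The inclusion $\conv G\subseteq\CAut(A)$ is automatic, so the entire task is the reverse inclusion $\CAut(A)\subseteq\conv G$.

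The first step is to force $S$ into the real span of $G$, which equals $\mathrm{span}\{E_\chi\}$ for the projections $E_\chi=\frac1{|G|}\sum_{P\in G}\chi(P)P$ onto $W_\chi$. The constraint $S\one=\one$, together with the generic fact that $\one\in W_{\chi_0}$ is not orthogonal to any eigenvector of $A|_{W_{\chi_0}}$, pins $S|_{W_{\chi_0}}=E_{\chi_0}=\frac1{|G|}\sum_P P$, which vanishes on entries joining distinct orbits. Now fix orbits $O_a\neq O_b$ with $p\in O_a$, $q\in O_b$. Using $(S|_{W_\chi})_{p,P(q)}=\chi(P)(S|_{W_\chi})_{p,q}$ and hence $S_{p,P(q)}=\sum_\chi\chi(P)(S|_{W_\chi})_{p,q}$, summation over $P\in G$ collapses, by character orthogonality, to a multiple of the trivial-character part $(S|_{W_{\chi_0}})_{p,q}=0$. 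Since every $S_{p,P(q)}\ge0$ and they sum to $0$, each vanishes; thus $S$ is block-diagonal across the orbits.

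Next I make $S$ scalar on each block. On $W_\chi$ the localized character vectors $u_{\chi,a}$ (each supported on a single orbit) form an orthonormal basis, and block-diagonality shows $S|_{W_\chi}$ is diagonal in this basis; it is also diagonal in the eigenbasis $\{v_j\}$ of $A|_{W_\chi}$. For generic $A$ one has $\langle u_{\chi,a},v_j\rangle\neq0$ for all relevant $a,j$, and a self-adjoint operator diagonal in two orthonormal bases with all pairwise inner products nonzero is scalar. Hence $S|_{W_\chi}=b_\chi I$, i.e. $S=\sum_\chi b_\chi E_\chi\in\mathrm{span}(G)$. Writing $S=\sum_{P\in G}\lambda_P P$, the condition $S\one=\one$ gives $\sum_P\lambda_P=1$, and here the full-orbit hypothesis enters: on a free orbit $O$ with base point $o$ the map $P\mapsto P(o)$ is injective, so $\lambda_P=S_{o,P(o)}\ge0$ by nonnegativity of $S$. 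Thus $S\in\conv G$, giving $\CAut(A)\subseteq\conv G$ and hence equality. Intersecting the finitely many measure-zero exceptional loci (simple spectrum, nondegeneracy of $\one$, and nonvanishing of the inner products $\langle u_{\chi,a},v_j\rangle$ on each block) yields the conclusion for $\mu_G$-almost every $A$.

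I expect the main obstacle to be the collapse carried out in the first two steps, namely that the a priori $n$-dimensional family of doubly-stochastic matrices commuting with $A$ is squeezed down onto the $|G|$-dimensional algebra $\mathrm{span}(G)$; this is where both entrywise nonnegativity and genericity of the spectrum do their real work. In particular the scalar-on-each-block conclusion hinges on showing that, off a measure-zero set of $\V(G)$, each fixed localized vector $u_{\chi,a}$ has nonzero inner product with every eigenvector of $A|_{W_\chi}$, so that simultaneous diagonality forces a scalar. This is exactly the point at which the reflection hypothesis is indispensable: it guarantees $1$-dimensional real characters, a generically simple spectrum, and a commutative commutant, without which $S|_{W_\chi}$ could only be forced into a nontrivial matrix algebra rather than a scalar, and the theorem (as the non-reflective counterexamples show) would genuinely fail. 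The remaining genericity bookkeeping—verifying that each exceptional set is a proper real-analytic subvariety of $\V(G)$—is routine.
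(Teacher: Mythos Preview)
Your argument is correct, and it takes a genuinely different route from the paper's. The paper proceeds by first establishing weak exactness (block-diagonality of $S$ across orbits) via the discriminative vector $s(A)=A\one$, then singles out the full orbit $I_r$ and uses a generic full-rank condition on the off-diagonal blocks $A_{rj}$ to show that the map $S\mapsto S_{rr}$ is injective on $\CAut(A)$; finally it proves a self-contained lemma that for a graph with simple spectrum and a \emph{transitive} automorphism subgroup $H$, every convex automorphism lies in $\conv H$. Thus the paper uses the full-orbit hypothesis twice: once for the rank of $A_{rj}$ and once for transitivity on $I_r$.

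Your approach instead exploits the global simple spectrum of a generic $A\in\V(G)$ (this is the same genericity lemma the paper proves) to force $S\in\RR[A]$, hence $S$ is symmetric and commutes with $G$. You then get block-diagonality from character orthogonality together with entrywise nonnegativity, rather than from the vector $s(A)$. Your ``scalar on each isotypic block'' step replaces the paper's reduction-to-one-orbit plus transitive lemma: because $G$ is abelian with real characters, each orbit contributes at most one dimension to each $W_\chi$, and the two-diagonal-bases argument (diagonal in both the eigenbasis of $A|_{W_\chi}$ and the localized-character basis $\{u_{\chi,a}\}$, with generically nonzero inner products) forces $S|_{W_\chi}=b_\chi I$. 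The full-orbit hypothesis enters only once, at the very end, to read off $\lambda_P=S_{o,P(o)}\ge 0$. A pleasant byproduct is that your argument establishes generic \emph{affine} exactness for reflection groups without the full-orbit assumption, with the full orbit used solely to pass from $\aff G\cap\{S\ge 0\}$ to $\conv G$; this dovetails nicely with the paper's later ``all-or-nothing'' discussion. The paper's approach, on the other hand, is more elementary (no character theory) and isolates a clean standalone lemma about transitive actions on simple-spectrum graphs. The genericity claims you defer (simple spectrum, $\langle\one,v_j\rangle\ne 0$, and $\langle u_{\chi,a},v_j\rangle\ne 0$) are indeed routine proper-subvariety arguments in each $\mathrm{Sym}(W_\chi)$ block.
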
 
As a result we obtain that convex exactness is a generic property for the simplest but, in the context of shape matching applications, most important, symmetry groups:
\begin{corollary}\label{cor:Z2}
If $G \cong \ZZ_2 $ then the DS relaxation is convex exact with respect to almost all $A \in \A(G) $.
\end{corollary}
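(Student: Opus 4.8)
The plan is to obtain the corollary as an immediate application of Theorem~\ref{th:main}: I only need to check that every permutation subgroup $G \cong \ZZ_2$ is a reflection group and has a full orbit, after which the theorem does all the work.

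First I would pin down the structure of $G$. As a subgroup of $\perm_n$ isomorphic to $\ZZ_2$, it has exactly two elements, $G = \{I_n, P\}$, where $P \neq I_n$ is the image of the generator. Since $P$ is the nontrivial element of a group of order two it has order two, so $P^2 = I_n$; as $I_n^2 = I_n$ as well, every element of $G$ squares to $I_n$ and hence $G$ is a reflection group in the sense of the first definition.

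Next I would verify the full orbit condition, which here means producing an orbit of length $|G| = 2$. Because $P \neq I_n$ it moves at least one vertex, so there is some $\a_i$ whose orbit $[\a_i]$ is not a singleton; combined with the general bound $|[\a_i]| \leq |G| = 2$ this forces $|[\a_i]| = 2$, so $G$ has a full orbit. With both hypotheses of Theorem~\ref{th:main} in place, the theorem yields convex exactness for almost every $A \in \A(G)$, which is exactly the assertion of the corollary. There is no real obstacle in this argument — the only thing to be careful about is translating the abstract statement $G \cong \ZZ_2$ into the concrete involution $P$ that the two definitions require — since all the mathematical content is carried by Theorem~\ref{th:main}.
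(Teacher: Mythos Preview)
Your proposal is correct and matches the paper's approach exactly: the paper presents the corollary as an immediate consequence of Theorem~\ref{th:main} without further argument, and your verification that any $G\cong\ZZ_2$ is a reflection group with a full orbit is precisely the (trivial) check that justifies this.
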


\paragraph{General groups}
For general groups we provide a "zero-one probability" result:
\begin{theorem}\label{th:allOrNothing}
	For any symmetry group $G \leq \perm_n $ one of the following holds:
	\begin{enumerate}
		\item The DS relaxation is convex exact with respect to almost every $A \in \A(G)$.
		\item The DS relaxation is \textbf{not} convex exact for any $A \in \A(G) $.
	\end{enumerate}
\end{theorem}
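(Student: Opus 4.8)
The plan is to reduce convex exactness to an equality of polytopes and then strip away the $A$-dependence that can be handled uniformly. Fix a symmetry group $G$. Every $A\in\A(G)$ satisfies $\Aut(A)=G$, so $\conv\Aut(A)=\conv G$, and by the discussion preceding the theorem $\DS(A)$ is convex exact iff $\CAut(A)=\conv G$. Writing $\mathcal{C}(A)=\{M:AM=MA\}$ for the commutant, we have $\CAut(A)=\{S\ge 0\}\cap L(A)$ with $L(A)=\mathcal{C}(A)\cap\{S:\ S\one=\one,\ \one^TS=\one^T\}$. The key first observation is that $\aff(G)\subseteq L(A)$ for \emph{every} $A$, since each $P\in G$ commutes with $A$ and affine combinations preserve the row and column sums. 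Hence the \emph{fixed} polytope $\mathcal K_G:=\aff(G)\cap\{S\ge 0\}$ satisfies $\conv G\subseteq\mathcal K_G\subseteq\CAut(A)$ independently of $A$.

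This already yields one side of the dichotomy for free. If $\mathcal K_G\supsetneq\conv G$, choose $T\in\aff(G)\setminus\conv G$ with $T\ge 0$ and write $T=\sum_{P\in G}\lambda_P P$ with $\sum_P\lambda_P=1$. Then $T$ is doubly stochastic and commutes with every $A\in\V(G)$, so $T\in\CAut(A)\setminus\conv G$ for all $A$; thus $\DS(A)$ is convex exact for \emph{no} $A\in\A(G)$, which is conclusion (2). Because $T$ does not depend on $A$, this branch is settled with no genericity analysis at all.

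It remains to treat $\mathcal K_G=\conv G$, where I would argue that conclusion (1) holds. Here convex exactness at $A$ is equivalent to $\{S\ge 0\}\cap L(A)\subseteq\aff(G)$, so all remaining $A$-dependence lives in the ``extra'' commutant directions $L(A)\supseteq\aff(G)$. Localizing at the centroid $\bar P=\tfrac1{|G|}\sum_{P\in G}P$, which is strictly positive on $\Sigma:=\{(i,j):\exists P\in G,\ P_{ij}=1\}$ and vanishes off $\Sigma$, convexity lets me rephrase the inclusion as a condition on the tangent cone at $\bar P$: writing $L_0(A)$ for the linear part of $L(A)$ and $\bar\Sigma$ for the off-support index set, exactness holds iff the cone $\{D\in L_0(A):\ D_{ij}\ge 0\ \text{for}\ (i,j)\in\bar\Sigma\}$ collapses onto the linear space of $\aff(G)$-directions (equivalently, the projected subspace $\pi(L_0(A))\subseteq\RR^{\bar\Sigma}$ meets the nonnegative orthant only at the origin, modulo directions already internal to $\aff(G)$). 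This orthant-avoidance condition is \emph{open}: on the unit sphere the relevant images and the set $\RR^{\bar\Sigma}_{\ge0}\cap\{\norm{x}=1\}$ are compact, so disjointness is stable, and $A\mapsto L_0(A)$ is real-analytic on the locus where $\dim\mathcal{C}(A)$ is minimal. Dually, when exactness fails one can generically exhibit a nondegenerate spurious doubly stochastic witness, and such a witness persists under perturbation, so failure is open there too. Consequently exactness is locally constant on the minimal-commutant locus, and producing a single exact instance together with connectedness of that locus upgrades ``locally constant'' to ``exact almost everywhere.''

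The hard part will be the connectedness of the minimal-commutant locus inside the linear space $\V(G)$. Decomposing $\RR^n$ into isotypic components for the permutation action of $G$, a matrix $A\in\V(G)$ is block-diagonal, and its commutant is minimal exactly when each block has simple spectrum and no eigenvalue is shared across distinct blocks. Coincidences \emph{within} a single block form a set of codimension two, by the von Neumann--Wigner avoided-crossing phenomenon for symmetric matrices, and therefore cannot disconnect; but coincidences of eigenvalues \emph{across} distinct isotypic blocks are only codimension one and could a priori separate the locus into pieces on which exactness differs. The crux is to show these codimension-one walls are harmless -- for instance by verifying that $L_0(A)$, and hence the exactness condition, extends analytically across them, or by passing to the connected space of commutant subspaces in the Grassmannian rather than working inside $\V(G)$ directly. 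By contrast, the polytope reductions of the earlier steps are routine linear algebra.
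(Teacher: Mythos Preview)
Your first two paragraphs are correct and match the paper: the polytope $\mathcal K_G=\aff(G)\cap\{S\ge 0\}$ is exactly the paper's condition~\eqref{e:weird_condition}, and when $\mathcal K_G\supsetneq\conv G$ convex exactness fails for every $A\in\A(G)$.

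The third paragraph, however, contains a genuine error. You assert that when $\mathcal K_G=\conv G$ conclusion~(1) must hold. This is false, and the paper's own examples refute it: the groups $G_1,G_2,G_3$ of Figure~\ref{fig:failures} (certain actions of $\ZZ_3$, $\ZZ_4$, $D_4$) all have a full orbit, which forces $\mathcal K_G=\conv G$, yet convex exactness fails for \emph{every} $A\in\A(G_i)$. So $\mathcal K_G=\conv G$ is only one of two obstructions, and your plan to ``produce a single exact instance'' cannot succeed in general because none exists. Even if you retreated to the weaker claim that exactness is clopen on the minimal-commutant locus, you would only obtain ``a.e.\ exact or a.e.\ not exact''; the theorem asserts the stronger ``a.e.\ exact or \emph{never} exact,'' and your topological route gives no control over the measure-zero complement.

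What you are missing is the paper's second reduction. After isolating condition~\eqref{e:weird_condition}, the paper restricts attention to matrices supported on $\N(G)$ (justified generically by weak exactness, Theorem~\ref{th:weakExactness}) and asks whether the resulting \emph{affine} space $\affAut(A)$ equals $\aff G$. This is an equality of linear spaces, not of polytopes, so it is governed by the rank of a matrix $F(A)U_1$ depending polynomially on $A$; the determinant of $U_1^TF(A)^TF(A)U_1$ is either identically zero on $\V(G)$ (affine exactness fails for all $A$, hence convex exactness fails for all $A$) or not (affine exactness holds off its zero set, hence a.e.). This polynomial dichotomy is what delivers the sharp ``all or nothing'' and sidesteps entirely the openness, witness-persistence, and connectedness issues you flag as the hard part. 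Your analysis never passes to $\N(G)$, so you are stuck comparing a polytope to a polytope rather than a linear space to a linear space, and the clean algebraic mechanism is unavailable.
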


\begin{wrapfigure}[19]{R}{0.5\textwidth}
	\centering
	\includegraphics[width=0.5\textwidth]{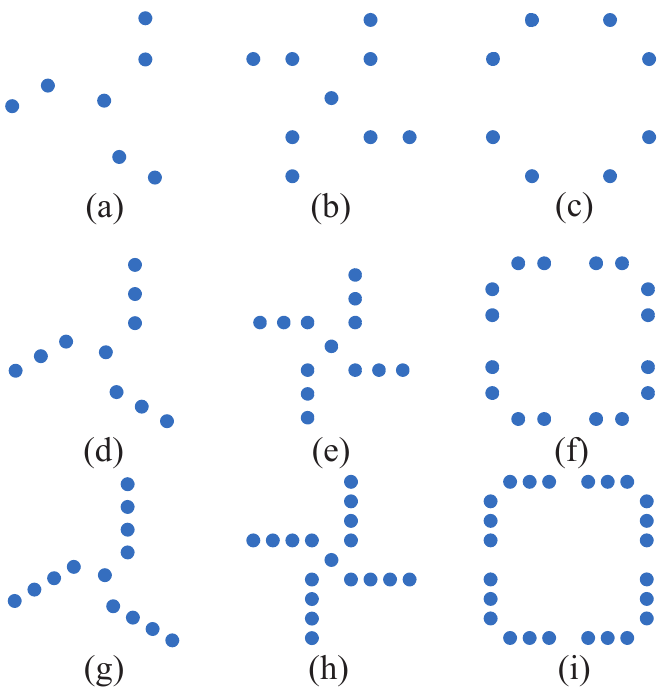}
	\vspace{-0.7 cm}
	\caption{\small Non-reflective symmetry groups.}
	\label{fig:failures}
	\vspace{0 cm}
\end{wrapfigure}
The proof of Theorem~\ref{th:allOrNothing} is constructive in the sense that it enables checking which of the two mutually exclusive alternatives described in the theorem hold for a given symmetry group $G$. By using this strategy we can establish that there are quite simple non-reflective symmetry groups for which convex exactness fails. Figure~\ref{fig:failures} shows nine groups $G_i$ represented by nine shapes whose symmetry group is $G_i$. For the first three groups (a)-(c) we found that convex exactness does not hold for any $A \in \A(G) $, while for the remaining groups convex exactness does hold for almost all $A \in \A(G) $. Note that all groups in the first column are isomorphic to $\ZZ_3$, all groups in the second column are isomorphic to $\ZZ_4$, and all groups in the last column are isomorphic to the dihedral group $D_4$. Thus we see that while convex exactness is a generic property for any $G$ isomorphic to $\ZZ_2$, in general different permutation groups can behave very differently with respect to the DS relaxation even if they are isomorphic in the sense of group theory.

\paragraph{Additional results: Permutation solutions and centroid solution} 
Since for symmetric problems the DS relaxation has an infinite number of convex isomorphisms, the question of achieving an "interesting" convex isomorphism arises. 
Naturally we would like to achieve a convex isomorphism which is a permutation. In the case of convex exactness this reduces to the problem of finding an extreme point (a "corner") of the set of convex isomorphisms, which is known to be a tractable problem. In Section~\ref{sec:iso} we describe two known methods to obtain extreme points. Additionally we provide a much faster algorithm for achieving all isomorphisms between $A$ and $B$. This algorithm is valid  for almost all graphs whose symmetry group $G$ satisfy the conditions of Theorem~\ref{th:main}.

A disadvantage of the methods mentioned above for finding permutation solutions is that they are constructed for perfectly isomorphic problems and are not suited for near isomorphic problems and are not used in practice. Instead, permutations are typically obtained using the $L_2$ projection or the more accurate, but more expensive, "convex  to concave" projection. We prove  that when convex exactness holds, the convex to concave projection is able to return an isomorphism. For  symmetric problems with a small amount of noise, we show experimentally that both projection methods are generally able to retrieve an isomorphism, and the convex to concave method is often able to retrieve an isomorphism for higher noise levels as well.    

An alternative "interesting" convex isomorphism which is easier to find than "corners" is the "centroid" of the set of isomorphisms:
\begin{equation}
\label{e:maxEntropy}
\Sc=\frac{1}{|G|}\sum_{P \in G}P, 
\end{equation}
where $G $ is the set of isomorphisms between $A$ and $B$. As advocated in \cite{softMaps}, finding $\Sc$ in the case of symmetric problems can potentially be useful as it gives an "encoding" of all isomorphisms of $A,B$. In Section~\ref{sec:centroid} we show that  the centroid solution is easier to find than corner solutions: In fact, for any symmetry group $G$, and almost every pair of isomorphic graphs $A,B \in \Iso(G) $, the centroid solution can be achieved (almost always) for \emph{any} symmetry group. Additionally we show that for such $A,B $  penalty based optimization methods will converge to $\Sc$ when solving $\DS(A,B) $. 

\begin{wrapfigure}[10]{R}{0.5\textwidth}\vspace{-0.4cm}
	\centering
	\includegraphics[width=0.5\textwidth]{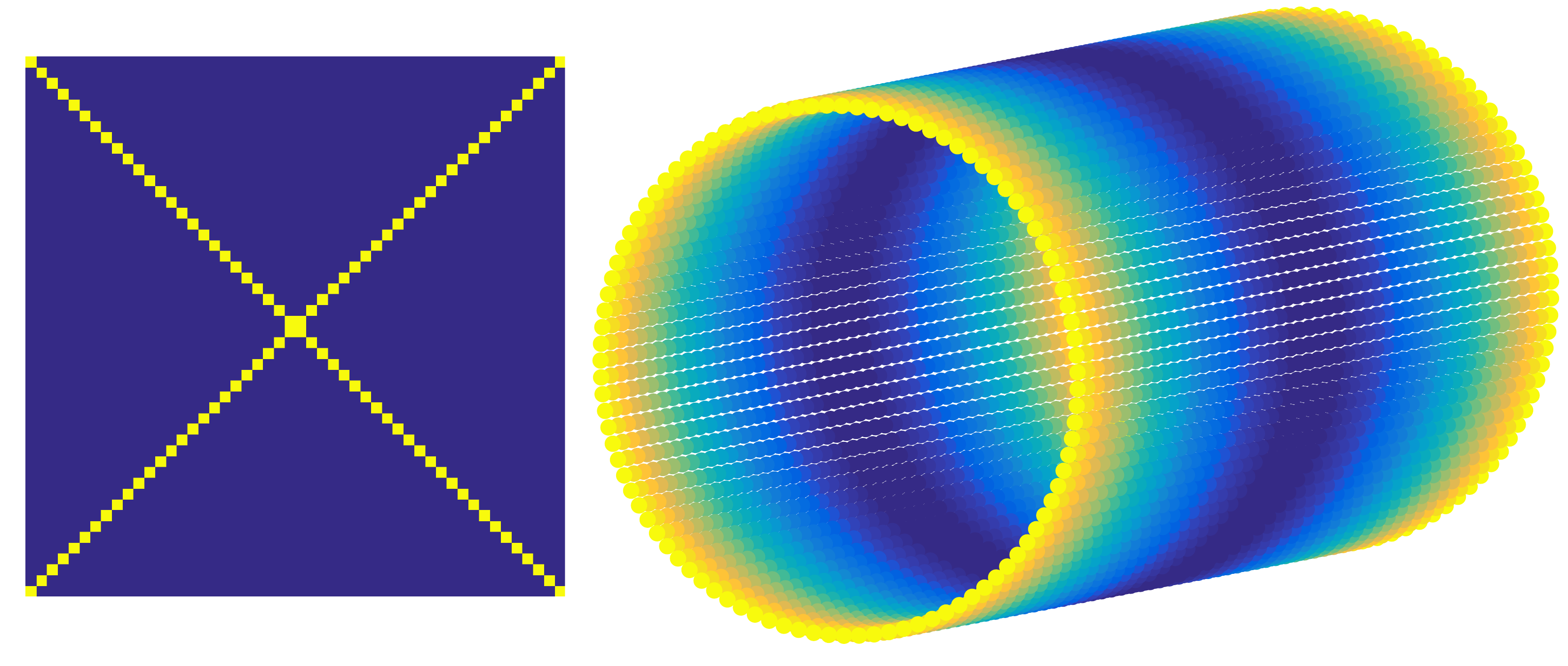}
	\caption{\small The centroid solution.}
	\label{fig:centroid}
	\vspace{+0cm}
	\vspace{-0.1 cm}
\end{wrapfigure}
An illustration of the centroid solution is shown in Figure~\ref{fig:centroid}, for the problem of mapping a cylinder to itself, using as $A=B$ the Euclidean distance matrix of the cylinder. The right part of the figure shows the cylinder, colored so that points in the same orbit of the symmetry group of the cylinder share the same color. The left part of the figure shows the matrix $\Sc$.  Each yellow square in the left figure is  a submatrix whose indices  correspond to a circular section of  the cylinder.   It can be seen that the centroid solution assigns each point of the cylinder with equal probability to any other point in its orbit. We note that the centroid solution always has this property. Therefore different symmetry groups which have identical orbits will have the same centroid solution, and so the symmetry group cannot generally be reconstructed from the centroid solution.

The remainder of the paper is organized as follows: In Section~\ref{sec:weak} we define the notion of weak exactness which will be useful for the proofs presented later on. In Section~\ref{sec:exact}
we prove convex exactness for reflective groups (Theorem~\ref{th:main}). In Section~\ref{sec:allOrNothing} we prove our "zero-one probability result" (Theorem~\ref{th:allOrNothing}) and explain how to check which of the two alternatives described in the theorem apply for a given group. In Section~\ref{sec:centroid} we discuss the issue of retrieving the centroid solutions and finally in Section~\ref{sec:iso} we discuss the issue of retrieving isomorphisms in the case that convex exactness holds.


\section{Weak exactness}\label{sec:weak}
An important tool for the proofs we present later on is the concept of \emph{weak exactness} which we will now define: For any set $G$ of permutation matrices we define
\begin{equation*}
\N(G)=\{T \in \RR^{n \times n}| \quad T_{ij}=0 \text{ if } P_{ij}=0 \text{ for all } P \in G \}. 
\end{equation*}
We say that $\DS(A)$ is weakly exact if all convex automorphisms of $A$ are in $\N(\Aut(A)) $.
Less formally, this means that the $i,j$ coordinate of a convex automorphism $S$ can be positive only if there is an automorphism taking $\a_i$ to $\a_j$.
If $\DS(A)$ is weakly exact and $B$ is isomorphic to $A$ then all convex isomorphisms of $A,B$ are in $\N(\Iso(A,B)) $. 
Weak exactness is guaranteed with full probability for any symmetry group $G$. We show this using the vector
\begin{equation*}
s(A)=A \one.
\end{equation*}
The vector $s(A)$ is invariant under automorphisms, meaning that if $\a_j \in [\a_i] $ then $s_i(A)=s_j(A) $. We say that $s(A)$ is \emph{discriminative} if for any $i,j$ such that $\a_j \not \in [\a_i] $, we have $s_i(A) \neq s_j(A) $. We prove
\begin{theorem}\label{th:weakExactness}
	Let $G$ be \emph{any} symmetry group. Then
	\begin{enumerate}
		\item If $s(A)$ is discriminative then $\DS(A)$ is weakly exact.
		\item For almost every $A \in \A(G) $, the vector $s(A)$ is discriminative.
	\end{enumerate}
\end{theorem}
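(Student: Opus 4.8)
The plan is to treat the two parts separately, establishing the pointwise implication (1) first and then deducing the genericity statement (2) from it.

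For part (1), I would start from an arbitrary convex automorphism $S \in \CAut(A)$, so that $S \in \DS$ and $AS = SA$. The first step is to observe that $s(A)$ is fixed by $S$: since $S$ is doubly stochastic we have $S\one = \one$, and therefore $S\, s(A) = S A \one = A S \one = A \one = s(A)$, using $AS = SA$. In this way the whole question reduces to understanding doubly stochastic matrices that fix a prescribed vector.

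The key step is then a short lemma: if $S$ is doubly stochastic and $Ss = s$, then $S_{ij} > 0$ forces $s_i = s_j$. I would prove this by a strict-convexity argument. For each $i$ the identity $s_i = \sum_j S_{ij} s_j$ exhibits $s_i$ as a convex combination of the entries $s_j$, so by strict convexity of $t \mapsto t^2$ we get $s_i^2 \le \sum_j S_{ij} s_j^2$, with equality if and only if $s_j = s_i$ for every $j$ with $S_{ij} > 0$. Summing over $i$ and using that the \emph{columns} of $S$ also sum to one gives $\sum_i s_i^2 \le \sum_j s_j^2 \big(\sum_i S_{ij}\big) = \sum_j s_j^2$, so the inequality is in fact an equality for every $i$, which forces $s_j = s_i$ whenever $S_{ij} > 0$. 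Now I would invoke discriminativeness: $s_i = s_j$ together with discriminativeness of $s(A)$ forces $\a_i$ and $\a_j$ to lie in the same orbit, so there is some $P \in \Aut(A)$ with $P_{ij} = 1$. Equivalently, $S_{ij}$ can be nonzero only at entries hit by some automorphism, which is exactly the statement $S \in \N(\Aut(A))$. This establishes weak exactness.

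For part (2), I would show that the set of $A \in \V(G)$ for which $s(A)$ is not discriminative is a finite union of proper linear subspaces of $\V(G)$, hence $\mu_G$-null; since $\A(G)$ has full measure in $\V(G)$ this gives the claim. Indeed $s(A)$ fails to be discriminative precisely when $s_i(A) = s_j(A)$ for some pair $i,j$ in distinct orbits. For fixed such a pair, the map $A \mapsto s_i(A) - s_j(A) = \sum_k (A_{ik} - A_{jk})$ is a linear functional $\ell_{ij}$ on $\V(G)$, whose zero set is either all of $\V(G)$ or a proper subspace of measure zero. The remaining point is to rule out $\ell_{ij} \equiv 0$, and here I would exhibit an explicit witness: take $A$ diagonal with $A_{kk} = 1$ for $k$ in the orbit of $i$ and $A_{kk} = 0$ otherwise. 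Since the diagonal is constant on orbits, such $A$ lies in $\V(G)$, and $s_i(A) = 1 \ne 0 = s_j(A)$ because $i,j$ are in distinct orbits. Thus each $\ell_{ij}$ is nonzero on $\V(G)$, every bad hyperplane is $\mu_G$-null, and the finite union over the pairs $(i,j)$ is null as well.

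The only genuinely nontrivial ingredient is the convexity lemma in part (1); the rest is bookkeeping about orbits together with the diagonal witness. The main thing I expect to double-check carefully is the use of \emph{both} the row and column stochasticity of $S$ in the summation step, since using only the row sums would not close the inequality and hence would not yield the pointwise conclusion $S_{ij}>0 \Rightarrow s_i = s_j$.
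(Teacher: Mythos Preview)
Your proof is correct and follows the same overall architecture as the paper: show $Ss=s$, deduce that $S_{ij}>0$ forces $s_i=s_j$, then invoke discriminativeness for part~(1); for part~(2), show each bad set $\{s_i(A)=s_j(A)\}$ is a proper linear subspace of $\V(G)$ by exhibiting a witness.

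The implementation differs in two places worth noting. For part~(1), the paper decomposes $S$ as a convex combination of permutations via Birkhoff--von~Neumann and then uses Cauchy--Schwarz on each summand to conclude that every permutation in the support fixes $s$; your Jensen argument works directly on the entries of $S$ and never invokes Birkhoff, which is a genuine simplification. For part~(2), the paper builds a single block-constant witness $\bar A$ (with $\bar A_{ij}=r|I_r|^{-1}$ on the $r$-th orbit block) that is discriminative and hence kills all the hyperplanes at once, whereas you construct a separate diagonal witness for each pair $(i,j)$. Both are valid; your diagonal construction is a bit cleaner, while the paper's single witness makes it transparent that discriminative $A$ actually exist in $\V(G)$.
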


We prove Theorem~\ref{th:weakExactness}. We first prove that if $s(A)$ is discriminative then $DS(A)$ is weakly exact. If $S$ is a convex isomorphism, then $s=s(A)$ is fixed by $S$ because
$$Ss=SA\one=AS\one=A\one=s. $$
Write $S$ as a convex combination of permutations $S=\sum_k \theta_k P(k) $. Using the fact that the operator norm of a permutation is one and the Cauchy-Schwartz inequality we obtain 
$$\norm{s}^2=\ip{Ss,s}=\sum_k \theta_k \ip{P(k)s,s} \leq^{(*)} \sum_k \theta_k \norm{P(k)}\norm{s}^2=\norm{s}^2. $$
so $(*)$ is an equality, implying that $P(k)s=s $ for all $k$. Now if $\a_j \not \in [\a_i] $ then $P_{ij}(k)=0 $ for all $k$ and therefore $S_{ij}=0 $. Thus we have proven that $\DS(A)$ is weakly exact when $s(A)$ is discriminative.

We now show that discriminativeness is a generic property. It is sufficient to show that for almost every $A \in \V(G) $ the claim holds since $\A(G) $ is a subset of $\V(G)$. Note that $s(A) $ is discriminative unless there are some $(i,j) $ such that  $\a_j$ is  not in $[\a_i] $ but $A$ is in the vector space 
	$$\V_{i,j}(G)=\{A \in \V(G)| s_i(A)=s_j(A) \}. $$
Thus it is sufficient to show that all these spaces are strict subspaces of $\V(G)$, which we accomplish by finding
 a member $\bar A \in \V(G)$ for which $s(\bar A)$ is discriminative. 
 
 To construct $\bar A$ let $I_1,I_2,\ldots,I_k$ be the partition of the vertices $\a$ induced by the action of $G$. For all $r \leq k $ and  $\a_i,\a_j \in I_r $ we set
 $$\bar A_{ij}=r|I_r |^{-1}. $$   	
If $\a_j \not \in [\a_i] $ we set $\bar A_{ij}=0$. The constructed graph $\bar A$ is a member of $\V(G)$, and the vector $s(\bar A) $ is discriminative since for all $r \leq k$ and $i \in I_r $ we have $s_i(\bar A)=r $.  This concludes the proof of Theorem~\ref{th:weakExactness}.

\paragraph{Counter example}
While weak exactness is guaranteed almost everywhere, it can still fail in very simple examples. Such examples can be constructed using the fact that if $\one $ is an eigenvector of $A$ then $\frac{1}{n}\one^T\one $ is always a valid convex automorphism. For example the graph
$$A_0=\begin{bmatrix}
6 & 1 & 2\\
1 & 5 & 3\\
2 & 3 & 4
\end{bmatrix} $$
is asymmetric, but satisfies $A_0 \one =\lambda \one $ for $\lambda=9$ and so $\frac{1}{n}\one^T\one $ is a convex isomorphism, and so weak exactness, and certainly exactness, does not hold. 

One method for overcoming such counter examples is adding a linear term to the graph matching energy penalizing for correspondences which do not respect isomorphism-invariants. For example, For each vertex $\a_i$ we can define $L(i) $ to be the sorted values of the $i$-th row of the graph. Clearly if $\a_j \in [\a_i] $ then $L(i)=L(j) $ so $L$ is an isomorphism invariant. Since in our example $L(i), i=1,2,3 $ are all distinct, the only zero-energy solution of the modified relaxation
\begin{equation}\label{e:invariants}
\min_{S \in \DS} \fnorm{SA_0-A_0S}^2+\sum_{ij}S_{ij}\norm{L(i)-L(j)}
\end{equation}
is the identity matrix.

\section{Convex exactness for reflective groups}\label{sec:exact}
Our goal in this section is proving convex exactness holds generically for reflective groups with a full orbit (Theorem~\ref{th:main}). We break up the proof of the theorem into two parts: The first part establishes sufficient  conditions which guarantee exact recovery, and the second part proves these sufficient conditions hold generically if $G$ is reflective and has a full orbit.

Fix some $G$ and $A \in \A(G) $. As in the previous section let $I_1,I_2,\ldots,I_k$ be the partition of the vertices $\a$ induced by the action of $G$, and denote $n_j=|I_j| $.
Let $S$ be a convex automorphism of $A$. Denote by $S_{ij} $ and $A_{ij} $ the submatrices of $S,A$ corresponding to the indices  $I_i \times I_j $. If $A$ is weakly exact then $S_{ij}=0 $ whenever $i \neq j $ and so  the equation $AS=SA $ takes the form 
\begin{equation}\label{e:blockEq}
A_{ij}S_{jj}=S_{ii}A_{ij}, \quad  \text{ for all } 1\leq i,j \leq k.
\end{equation}

\subsection{Sufficient conditions for convex exactness}
\begin{proposition}\label{prop:deterministic}
		Let $A$ be a graph. If $\exists r, 1 \leq r \leq k $ such that
	\begin{enumerate}
		\item The vector $s(A)$ is discriminative.
		\item $\text{rank}(A_{rj})=n_j $  for all $j \neq r$.
		\item $A_{rr}$ has simple spectrum.
	\end{enumerate}
	then $\DS(A)$ is convex exact. 
\end{proposition}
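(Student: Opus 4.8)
The plan is to prove the equality $\CAut(A)=\conv\Aut(A)$ demanded by convex exactness; the inclusion $\conv\Aut(A)\subseteq\CAut(A)$ is already \eqref{e:convSubseteq}, so everything reduces to showing that an arbitrary convex automorphism $S$ lies in $\conv\Aut(A)$. First I would invoke hypothesis (1) and Theorem~\ref{th:weakExactness} to get weak exactness, so that $S$ is block-diagonal with respect to the orbit partition $I_1,\ldots,I_k$ and satisfies the block equations \eqref{e:blockEq}. Restricting to the distinguished orbit $I_r$, the equation $A_{rr}S_{rr}=S_{rr}A_{rr}$ together with hypothesis (3) forces $S_{rr}$ into the commutant of $A_{rr}$. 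Since $A_{rr}$ is symmetric with simple spectrum, this commutant is the $n_r$-dimensional commutative algebra of matrices diagonal in its orthonormal eigenbasis; in particular $S_{rr}$ is automatically symmetric, and the same reasoning shows that the $r$-block $P_r$ of every $P\in\Aut(A)$ is a symmetric orthogonal matrix in that same algebra, hence an involution.

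Next I would extract the group-theoretic structure this imposes. Because automorphisms are block-diagonal, hypothesis (2) (full column rank of $A_{rj}$) combined with the block relation $A_{rj}P_j=P_rA_{rj}$ shows that $P$ is determined by $P_r$, so the restriction map $P\mapsto P_r$ embeds $\Aut(A)$ into the symmetric group on $I_r$. The images pairwise commute (they are simultaneously diagonalized by the eigenbasis of $A_{rr}$) and are involutions, so $\Aut(A)$ is realized as a faithful, transitive, elementary abelian group of exponent $2$ acting on $I_r$. A faithful transitive abelian action is regular, which yields $|\Aut(A)|=n_r$ and turns $\{P_r^{(l)}\}_{l}$ into a regular representation: these $n_r$ permutation matrices have pairwise disjoint supports, hence are linearly independent and span the whole commutant of $A_{rr}$.

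The crux is then to pin down $S_{rr}$. Being in the commutant, $S_{rr}=\sum_l c_l P_r^{(l)}$ is a real combination of the automorphism blocks; because these have disjoint supports, each entry of $S_{rr}$ equals exactly one coefficient $c_l$, so nonnegativity of $S$ gives $c_l\ge 0$ and the row-sum-one constraint gives $\sum_l c_l=1$. Thus $S_{rr}\in\conv\{P_r^{(l)}\}$. Finally I would lift this back by setting $S'=\sum_l c_l P^{(l)}\in\conv\Aut(A)$, a convex automorphism with $S'_{rr}=S_{rr}$; for each $j\neq r$ both $S_{jj}$ and $S'_{jj}$ solve $A_{rj}X=S_{rr}A_{rj}$, and the full column rank in hypothesis (2) makes this solution unique, whence $S_{jj}=S'_{jj}$ for all $j$ and $S=S'\in\conv\Aut(A)$.

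I expect the main obstacle to be the middle paragraph: extracting, from the three purely algebraic hypotheses, the rigid picture that the $r$-blocks of the automorphisms form the regular representation of an elementary abelian $2$-group on $I_r$. Once that is in place, the disjoint-support linear independence and the doubly-stochastic bookkeeping are routine, and the lifting through hypothesis (2) is a short uniqueness argument. It is worth noting that this analysis simultaneously reveals that the hypotheses already force $\Aut(A)$ to be a reflection group with $I_r$ a full orbit, which is precisely why Proposition~\ref{prop:deterministic} can serve as the engine for Theorem~\ref{th:main}.
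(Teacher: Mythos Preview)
Your argument is correct. Both you and the paper reduce to the $r$-block via weak exactness and the injectivity of $S\mapsto S_{rr}$ coming from condition (2), but the treatment of the key step---showing that $S_{rr}\in\conv\{P_{rr}:P\in\Aut(A)\}$---is genuinely different. The paper packages this step as a self-contained lemma (``if $A$ has simple spectrum and $H\leq\Aut(A)$ is transitive, then $\DS(A)$ is convex exact'') proved by an eigenvector-row argument: transitivity forces every eigenvector of $A_{rr}$ to have entries of constant absolute value, hence nonzero, and then the first row of $SV=VD$ directly expresses the diagonal $D$ as the convex combination $\sum_k S_{1k}D(k)$. You instead make the group structure explicit: commutation with a simple-spectrum matrix forces the $P_r$'s to be commuting involutions, injectivity of $P\mapsto P_r$ plus transitivity on the orbit $I_r$ yields a faithful transitive abelian action, hence a regular one, and regularity gives disjoint supports of the $P_r^{(l)}$, from which the convex-combination coefficients are read off entrywise. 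Your route is slightly longer but more structural: it makes visible that the hypotheses already force $\Aut(A)$ to be a reflection group with $I_r$ a full orbit, which is exactly the converse direction connecting the proposition to Theorem~\ref{th:main}. The paper's route is shorter and avoids naming the group, at the cost of being less transparent about why the lemma works.
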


\begin{proof}[Proof of Proposition~\ref{prop:deterministic}]
	The first condition guarantees weak exactness, and thus that all convex isomorphisms will satisfy \eqref{e:blockEq}. Setting $i=r$ in this equation we obtain
	$$A_{rj}S_{jj}=S_{rr}A_{rj}, \quad \text{ for all } j $$
	and so by the second condition $S_{jj} $ is determined uniquely by $S_{rr}$. It follows that the restriction of the linear map
	$$S \mapsto S_{rr} $$
	to $\CAut(A) $ is injective. Therefore it is sufficient to show that $S_{rr} $ is a convex combination of the permutation matrices $P_{rr} $ obtained by restricting the automorphisms $P \in \Aut(A)$ to $I_r \times I_r $. By taking $i=r,j=r $ in \eqref{e:blockEq} we see that $S_{rr}$ is a  convex automorphism of the subgraph $A_{rr}$, and  the group
	$$H=\{P_{rr}| \quad P \in \Aut(A) \} $$
	is a subgroup of $\Aut(A_{rr}) $ which acts transitively on $I_r $. By the third assumption $A_{rr} $ has simple spectrum. Thus to show $A_{rr}$ is a convex combination of elements of $H$ it is sufficient to prove
	\begin{lemma}
	If $A\in \S^n $ is a graph with simple spectrum, and $H\leq \Aut(A)$ acts transitively on the vertices $\a$, then $H=\Aut(A) $ and $\DS(A) $ is convex exact.
	\end{lemma}
    We now conclude the proof of the proposition by proving the lemma. In this proof $B_j$ denotes the $j$-th column of the matrix $B$, and $B_{i \star} $ denotes the $i$-th row of $B$.
    
    To prove the lemma it is sufficient to show that
    \begin{equation}\label{e:sufficient}
    \CAut(A) \subseteq \conv H
    \end{equation}
    because this implies that 
    $$\conv \Aut(A) \subseteq \CAut(A) \subseteq \conv H \subseteq \conv \Aut(A) $$
    which proves that $\DS(A)$ is convex exact. Additionally all automorphisms $P \in \Aut(A)$ are in $\conv H$, and since permutations are extreme points of $\DS$ this can only occur if $P \in H$, and so $H=\Aut(A) $.
    
     We prove \eqref{e:sufficient} using   an argument from \cite{PMexact}; If  $S$ is a convex automorphism of $A$ and $v$ is an eigenvector of $A$ with eigenvalue $\lambda$, Then
     $$ASv=SAv=\lambda Sv $$
     so either $Sv=0 $ or $Sv$ is an eigenvector of $A$ with eigenvalue $\lambda$. Since $A$ has simple spectrum it follows that $Sv=\alpha v $ for some $\alpha \in \RR $. If $S=P $ is an automorphism of $A$ then $\alpha \in \{-1,1\} $.
     
     Let $S$ be a convex automorphism of $A$. We want to show that $S \in \conv H $. Since $H$  acts transitively on $\a$, there are permutation matrices $P(1),\ldots,P(n) \in H $ such that for any vector $w\in \RR^n$
     $$w_k=(P(k)w)_1. $$
      Denote by $V$ the matrix whose columns are the eigenvectors of $A$. Then there is a diagonal matrix $D$ and diagonal matrices $D(1),\ldots,D(n) $ such that 
     \begin{equation} \label{e:SVVD}
     D=V^TSV \text{ and } D(k)=V^TP(k)V, \text { for all } 1 \leq k \leq n.
     \end{equation}  
	Note that $S$ is a convex combination of $P(1),\ldots,P(n) $ if and only if $D $ is a convex combination of $D(1),\ldots,D(n) $. If $v$ is the $j$-th eigenvector of $A$ then 
		$$|v_k|=|(P(k)v)_1|=|D_{jj}(k)v_1|=|v_1| $$
		so in particular $v$ has no zero coordinates.
		
	From \eqref{e:SVVD} we obtain
		\begin{equation*}
		(VD)_{1 \star}=( SV)_{1 \star}=\sum_{k=1}^n S_{1 k}V_{k \star}=\sum_{k=1}^n S_{1 k}(P(k)V)_{1 \star}=\sum_{k=1}^n  S_{1 k}(VD(k))_{1 \star} \;
		\end{equation*} 
		and therefore
		$$
		V_{1 \star}\left(D-\sum_{k=1}^n  S_{1 k}(D(k))\right)=0.
		$$
		Since all entries of $V$ are non-zero the only diagonal matrix solving the equation above is the zero matrix. Thus we obtain $D $ as a convex combination of $D(k) $:
		$$D=\sum_{k=1}^n  S_{1 k}(D(k)). $$     
	\end{proof}
\subsection{Genericity of the sufficient conditions}
In this subsection we  prove the sufficient conditions of Proposition~\ref{prop:deterministic} hold generically if $G$ is reflective and has a full orbit. The first condition was proved to hold generically for any symmetry group $G$ in Theorem~\ref{th:weakExactness}. We choose the $r$ appearing in the last two conditions of of Proposition~\ref{prop:deterministic} such that $I_r$ is a full orbit, or equivalently $|I_r|=|G| $. We begin with some preliminaries.  	

\paragraph{Preliminaries}
Recall that for a symmetry group $G$ of dimension $d$, the measure $\mu_G$ can be defined as the restriction to $\V(G)$ of the $d$-dimensional Hausdorff measure $H^d $ on $\RR^{n \times n}$.  
We cite some basic properties of the Hausdorff measure and dimension from chapter 2 in \cite{falconer2004fractal} which will be helpful for the proof of Lemma~\ref{l:simple}.
\begin{enumerate}
	\item If $C $ has Hausdorff dimension $k$ and $s>k$, then $H^s(C)=0$.
	\item If $M \subseteq \RR^n$ is a submanifold of dimension $d$, then its Hausdorff dimension is $d$ as well.
	\item If $B=\cup_{i \in I} B_i$ and $I$ is countable,  then $\dim B=\sup_{i \in I} \dim B_i $.
	\item If $B \subseteq C $ then $\dim(B) \leq \dim(C) $.
	\item If $B \subseteq \RR^m $ and $f:B \to \RR^n $ is Lipschitz, then $\dim f(B)\leq \dim(B) $. 
	
	 An immediate consequence is that the latter inequality holds if $f$ is a $C^1$ function defined on all of $\RR^m $. To see this denote $B_k=B \cap \{x| \quad \|x\|\leq k \} $ and note that the restriction of $f$ to $B_k $ is Lipschitz. Therefore 
	\begin{equation} \label{e:dimf}
	\dim f(B)=\dim \cup_k f(B_k)=\sup_k \dim f(B_k) \leq \sup_k \dim B_k \leq \dim B 
	\end{equation}
\end{enumerate}

For Lemma~\ref{lem:fullRank} we will need the following simple lemma. We include a proof for completeness:
\begin{lemma}\label{l:variety}
	If $p(x)$ is a non-zero multivariate polynomial $p:\RR^d \to \RR $, then the set $\{x| \;  p(x)=0\}$ has Lebesgue measure zero.
\end{lemma}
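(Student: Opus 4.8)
The plan is to prove the statement by induction on the number of variables $d$, using Fubini's theorem to reduce the $d$-dimensional problem to the $(d-1)$-dimensional one together with the one-dimensional base case. Throughout I write $\lambda_k$ for $k$-dimensional Lebesgue measure.

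For the base case $d=1$, a non-zero univariate polynomial of degree $m$ has at most $m$ roots, so its zero set is finite and hence $\lambda_1$-null. For the inductive step I would write $x=(x',x_d)$ with $x'=(x_1,\ldots,x_{d-1})$ and expand $p$ as a polynomial in the last variable,
$$p(x',x_d)=\sum_{i=0}^{m} c_i(x')\, x_d^i,$$
where each $c_i$ is a polynomial in $x'\in\RR^{d-1}$. Since $p$ is not the zero polynomial, at least one coefficient $c_{i_0}$ is a non-zero polynomial. I would then partition $\RR^{d-1}$ into the degenerate set $E=\{x' : c_i(x')=0 \text{ for all } i\}$, on which $p(x',\cdot)$ vanishes identically, and its complement $E^c$, on which $p(x',\cdot)$ is a non-zero univariate polynomial in $x_d$ and therefore, by the base case, has only finitely many roots.

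Denoting the zero set by $Z=\{x : p(x)=0\}$, I would split it as $Z=Z_1\cup Z_2$ with $Z_1=Z\cap(E\times\RR)$ and $Z_2=Z\cap(E^c\times\RR)$. For $Z_2$, every vertical slice $\{x_d : (x',x_d)\in Z_2\}$ is finite and hence $\lambda_1$-null, so Fubini's theorem yields $\lambda_d(Z_2)=0$. For $Z_1$, observe that $E\subseteq\{x' : c_{i_0}(x')=0\}$; applying the inductive hypothesis to the non-zero polynomial $c_{i_0}$ on $\RR^{d-1}$ shows $\lambda_{d-1}(E)=0$, whence $\lambda_d(E\times\RR)=0$ and therefore $\lambda_d(Z_1)=0$. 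Combining the two pieces gives $\lambda_d(Z)=0$, completing the induction.

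\textbf{Main obstacle.} The only genuine subtlety is the degenerate set $E$, where the reduction to finitely many roots breaks down because the slice $p(x',\cdot)$ is identically zero. The key idea that resolves this is that $E$ is contained in the zero set of a single non-zero coefficient polynomial in strictly fewer variables, so the inductive hypothesis disposes of it. Some routine care is also needed to invoke Fubini/Tonelli correctly — using the non-negative slice counting so that measurability is automatic and the base set $E$ being null makes any $0\cdot\infty$ contribution vanish — but this is straightforward once the slice structure is understood.
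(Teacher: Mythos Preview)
Your proof is correct and follows essentially the same approach as the paper: induction on $d$, expansion of $p$ as a polynomial in the last variable with polynomial coefficients in the remaining variables, use of the inductive hypothesis to show the set where all coefficients vanish is null, and Fubini to handle the complement where each slice has finitely many roots. The only cosmetic difference is that you explicitly split $Z$ into $Z_1\cup Z_2$, whereas the paper phrases the same argument slightly more tersely.
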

\begin{proof}
	By induction. For $d=1$ the claim is obvious. We assume the claim holds for $d-1$ and show it holds for $d$. Rewrite $p$ as
	$$p(x)=\sum_{j=1}^n q_j(x_1,\ldots,x_{d-1})x_d^j. $$
	By the induction hypothesis the set
	$$C=\{(x_1,\ldots,x_{d-1})| \quad \forall j , \quad   q_j(x_1,\ldots,x_{d-1})=0\} $$
	has measure zero in $\RR^{d-1} $. For any fixed $(x_1,\ldots,x_{d-1}) $ in the complement of $C$, $p(x)$ is a univariate non-zero polynomial and has zeros in a (finite) subset of $\RR$ of measure zero. Using Fubini's theorem this implies that the set $\{x| \;  p(x)=0\}$ has Lebesgue measure zero.
\end{proof}
\paragraph{Proof of genericity}
	
A graph $A$ is in $\V(G)$ if it is symmetric and \eqref{e:blockEq} is satisfied when $S$ is replaced with all permutations $P \in G $. This means that $\V(G)=\oplus_{i \geq j} \V_{ij}(G) $ where
$$\V_{jj}(G)=\{A_{jj} \in \S(n_j)| \quad  A_{jj}P_{jj}=P_{jj}A_{jj} \text{ for all } P \in G \}. $$
and for $i>j$ 
$$\V_{ij}(G)=\{ A_{ij} \in \RR^{n_i \times n_j}| \quad A_{ij}P_{jj}=P_{ii}A_{ij}, \quad \text{ and } \quad A_{ij}^TP_{ii}=P_{jj}A_{ij}^T.
\} $$

Thus to prove the second condition is generic it is sufficient to show that almost every $A_{rr} \in \V_{rr}(G) $ has simple spectrum, and that to prove the third condition is generic we need to show that almost every $A_{rj} \in \V_{rj}(G) $ has full rank. Thus the second condition follows by setting $A=A_{rj} $ and $\V=\V_{rj}$ in the following Lemma: 
\begin{lemma}\label{l:simple}
If $G$ is reflective then almost all $A \in \V(G)$ has simple spectrum.	
\end{lemma}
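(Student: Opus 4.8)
The plan is to exploit the defining reflection property $P^2=I_n$ to obtain strong structural control over $\V(G)$, and then to reduce ``simple spectrum'' to the non-vanishing of a single discriminant polynomial, which I dispatch using Lemma~\ref{l:variety}.

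First I would observe that a reflection group is automatically abelian: for $P,Q \in G$ the product $PQ$ again squares to the identity, so $PQPQ=I_n$, and since $P^{-1}=P$ and $Q^{-1}=Q$ this yields $PQ=QP$. Moreover each $P \in G$ is symmetric, since a permutation matrix is orthogonal and $P^2=I_n$ forces $P=P^{-1}=P^T$. Thus $G$ is a commuting family of real symmetric matrices and can be simultaneously diagonalized by an orthogonal change of basis. This decomposes $\RR^n$ into an orthogonal direct sum $\RR^n=\bigoplus_\chi W_\chi$ of joint eigenspaces indexed by the characters $\chi:G \to \{-1,1\}$ that actually occur, where $W_\chi=\{v : Pv=\chi(P)v \text{ for all } P \in G\}$, and each $P \in G$ acts on $W_\chi$ as the scalar $\chi(P)$.

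Next I would identify $\V(G)$ explicitly. Since $P$ is orthogonal, $A=P^TAP$ is equivalent to $AP=PA$, so a symmetric matrix lies in $\V(G)$ iff it commutes with every $P \in G$; because each $G$-element acts as a scalar on each $W_\chi$, this is in turn equivalent to $A$ leaving every joint eigenspace $W_\chi$ invariant. In the adapted orthonormal basis this means $A$ is block-diagonal with one symmetric block $A|_{W_\chi}$ per character, the blocks being otherwise completely unconstrained. Hence there is a linear isomorphism $\V(G)\cong \bigoplus_\chi \mathrm{Sym}(W_\chi)$, and in particular $\V(G)$ is large enough to contain matrices whose eigenvalues can be prescribed freely on each block. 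This is precisely the step where reflectivity is essential: for a non-reflective $G$ the real irreducible constituents of the permutation action may be two-dimensional, which forces every $A \in \V(G)$ to carry repeated eigenvalues — exactly the phenomenon underlying the failure examples (a)--(c) of Figure~\ref{fig:failures}.

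Finally I would convert this into the desired genericity statement. The discriminant of the characteristic polynomial $\det(\lambda I_n-A)$, equal up to sign to $\prod_{i<j}(\lambda_i-\lambda_j)^2$, is a polynomial in the entries of $A$ whose zero set is exactly the set of matrices with a repeated eigenvalue; composing it with a linear isometric parametrization $\RR^d \to \V(G)$ gives a polynomial $p$ on $\RR^d$. To conclude it suffices to exhibit a single $A_0 \in \V(G)$ with simple spectrum, for then $p$ is not identically zero and Lemma~\ref{l:variety} shows that its zero set, hence the set of $A \in \V(G)$ with a repeated eigenvalue, has $\mu_G$-measure zero. Such an $A_0$ is immediate from the block description: take each block diagonal in the adapted basis and choose the $n$ diagonal entries across all blocks to be pairwise distinct, producing $n$ distinct eigenvalues. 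The only routine care is the bookkeeping ensuring cross-block eigenvalues differ, which is free since the blocks are independent; the genuinely load-bearing point is the structural identification $\V(G)\cong \bigoplus_\chi \mathrm{Sym}(W_\chi)$, where the reflection hypothesis does all the work.
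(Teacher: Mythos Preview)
Your argument is correct. The structural core---that reflectivity forces $G$ to be abelian and symmetric, hence simultaneously orthogonally diagonalizable, so that $\V(G)$ is identified with $\bigoplus_\chi \mathrm{Sym}(W_\chi)$---is exactly the paper's argument. Where you diverge is the final measure-zero step: the paper parametrizes $\V(G)$ by $\prod_i \bigl(\OO(d_i)\times\RR^{d_i}\bigr)$ via blockwise spectral decomposition, observes that matrices with a repeated eigenvalue are the image of a codimension-one subset, and concludes by the Hausdorff-dimension properties listed just before the lemma. You instead pull back the discriminant of the characteristic polynomial along a linear isometry $\RR^d\to\V(G)$, exhibit a single $A_0\in\V(G)$ with simple spectrum (diagonal in the adapted basis with pairwise distinct entries), and invoke Lemma~\ref{l:variety}. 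Your route is more elementary---it avoids the Hausdorff machinery entirely and reuses a lemma already present in the paper---while the paper's parametrization gives a slightly more geometric picture of exactly how the non-simple locus sits inside $\V(G)$. Both are complete.
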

\begin{proof}

Since $G$ is reflective all $P \in G $ satisfy
 $$P=P^{-1}=P^T. $$

Members $P,Q \in G $ commute because
$$PQ=(PQ)^T=Q^TP^T=QP. $$
Thus $G$ can be diagonalized simultaneously, and so we can partition $\RR^n$ into a direct sum of eigenspaces $\RR^n=\oplus_{i=1}^{\ell} W_i $. We denote the dimension of each eigenspace by $d_i$. Select for each subspace $W_i$ a matrix $V_i \in \RR^{n \times d_i} $ whose columns form an orthogonal eigenbasis of $W_i$, and denote
$$V=[V_1,\ldots,V_\ell]. $$

A graph $A$ is  in $ \V(G) $ if and only if it is symmetric and it commutes with the members of $G$. This in turn occurs if and only if $A$ and all members of $G$ can be diagonalized simultaneously, and so there are symmetric matrices $\bar A_i, \quad i=1,\ldots,\ell $ such that 
$$V^TAV= 
\left[ \begin{array}{cccc}
\bar A_1 &           &            &         \\
         & \bar A_2  &            &         \\
         &           & \ddots     &         \\
         &           &            & \bar A_{\ell}
\end{array} \right]       
$$ 
It follows that $\V(G) $ can be identified with $\oplus_{i=1}^\ell S(d_i) $, and is thus of dimension
$$\dim(\V(G))=\sum_{i=1}^\ell \frac{d_i^2+d_i}{2} $$ 

For $(U_i,\lambda_i) \in \OO(d_i) \times \RR^{d_i} $ we define $D(\lambda_i)$ to be the diagonal matrix whose diagonal entries are $\lambda_i$, and define
$$\bar A_i(U_i,\lambda_i)=U_iD(\lambda_i)U_i^T. $$
 Now consider the function $f:\prod_{i=1}^\ell \left( \OO(d_i) \times \RR^{d_i} \right) \to \RR^{n \times n} $ defined by
$$f(U_1,\lambda_1,\ldots,U_\ell,\lambda_\ell)= 
V \left[ \begin{array}{cccc}
\bar A_1(U_1,\lambda_1) &           &            &         \\
& \bar A_2(U_2,\lambda_2)  &            &         \\
&           & \ddots     &         \\
&           &            & \bar A_{\ell}(U_\ell,\lambda_\ell)
\end{array} \right] V^T    $$
The image of $f$ is precisely $\V(G) $. Moreover the dimension of the domain of $f$ is
\begin{equation*}
\dim \text{dom}(f)=\sum_{i=1}^\ell \frac{d_i^2-d_i}{2}+d_i=\dim \V(G).
\end{equation*}
The complement of the set of graphs $A \in \V(G) $ with simple spectrum is a union of sets of the form $f(E_{qr}) $ where
$$E_{qr}=\{(U_i,\lambda_i)_{i=1}^\ell| \quad \text{ for } \bar \lambda=(\lambda_1,\ldots,\lambda_\ell), \quad  \bar \lambda_q=\bar \lambda_r   \}.$$ 
Since the dimension of each such set is strictly smaller than the dimension of the domain we obtain:
$$\dim f(E_{qr}) \leq^{\eqref{e:dimf}} \dim E_{qr} \leq  \dim \text{dom}(f)-1=\dim \V(G)-1$$
and so the complement of the set of graphs $A \in \V(G) $ with simple spectrum is dimension deficient and thus has zero Hausdorff measure. 
\end{proof}
We now prove the third condition holds generically. 
\begin{lemma}\label{lem:fullRank}
If $G$ has a full orbit $I_r $ then almost every $A_{rj} \in \V_{rj}(G) $ has full rank.
\end{lemma}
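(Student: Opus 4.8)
The plan is to follow the template of Lemma~\ref{l:simple}: the rank-deficient matrices in $\V_{rj}(G)$ form the common zero set of polynomials, so by Lemma~\ref{l:variety} it suffices to produce a single full-rank witness. Concretely, I would fix a linear isometry identifying $\V_{rj}(G)$ with $\RR^{d'}$, where $d'=\dim\V_{rj}(G)$; then every entry of $A_{rj}$ is a linear function of the coordinates, and every $n_j\times n_j$ minor is a polynomial $p$ in them. Since $I_r$ is a full orbit we have $n_r=|G|\geq n_j$, so ``full rank'' means full column rank $n_j$, and $A_{rj}$ is rank deficient exactly when all of its $n_j\times n_j$ minors vanish. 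This rank-deficient set is contained in $\{p=0\}$ for each individual minor $p$, so once I exhibit one $\bar A\in\V_{rj}(G)$ at which some $n_j\times n_j$ minor is nonzero, that minor is a nonzero polynomial, Lemma~\ref{l:variety} forces $\{p=0\}$ to have measure zero, and hence the rank-deficient set has measure zero. Thus the lemma reduces to constructing one full-rank element of $\V_{rj}(G)$.

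To build such a $\bar A$ I would exploit the full-orbit hypothesis to read the index sets as $G$-sets. Since $G$ acts freely and transitively on $I_r$, fixing a base point $o\in I_r$ gives an identification $I_r\cong G$ via $g\mapsto g\cdot o$, on which $G$ acts by left multiplication (the regular action); and since $G$ acts transitively on $I_j$, fixing $o'\in I_j$ with stabiliser $H_j=\{g\in G:\, g\cdot o'=o'\}$ identifies $I_j$ with $G/H_j$ via $gH_j\mapsto g\cdot o'$, so that $n_j=|G|/|H_j|$. I then define the ``coset membership'' matrix $\bar A$ by $\bar A_{x,y}=1$ if $x=g\cdot o$ and $y=g'\cdot o'$ with $gH_j=g'H_j$, and $\bar A_{x,y}=0$ otherwise. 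A one-line combinatorial check shows $\bar A_{px,py}=\bar A_{x,y}$ for every $p\in G$, which is exactly the pair of equivariance conditions $\bar A P_{jj}=P_{rr}\bar A$ and $\bar A^T P_{rr}=P_{jj}\bar A^T$ defining $\V_{rj}(G)$; hence $\bar A\in\V_{rj}(G)$. Finally, the column of $\bar A$ indexed by a coset $C\in G/H_j$ is the indicator vector of $C$ seen as a subset of $I_r\cong G$; distinct cosets are disjoint, so these columns have pairwise disjoint supports, are linearly independent, and $\bar A$ has full column rank $n_j$.

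The substantive step is the construction of $\bar A$, and this is also precisely where the full-orbit hypothesis is used: identifying $I_r$ with the regular $G$-set $G$ is what lets every coset of $H_j$ sit inside $I_r$ as a genuine subset, producing disjoint indicator columns and hence an injective equivariant map. In representation-theoretic terms this is the statement that the regular representation contains every irreducible of $G$, so $\mathrm{Hom}_G(\RR[I_j],\RR[I_r])$ contains injective elements; were $I_r$ a smaller orbit, some character occurring in $I_j$ could be absent from $I_r$ and full rank would fail generically. I note that reflectivity of $G$ is not actually needed in this argument -- only the existence of a full orbit -- so the lemma holds for arbitrary symmetry groups possessing a full orbit. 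The only points needing care are the equivariance verification and the observation $n_j\leq|G|$ that makes full rank coincide with column rank $n_j$, and both are routine.
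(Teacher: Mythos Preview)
Your proposal is correct and is essentially the same argument as the paper's: both reduce to finding one full-rank witness via Lemma~\ref{l:variety}, and your ``coset membership'' matrix is exactly the paper's indicator of a single $G$-orbit on $I_r\times I_j$ (your base points $o,o'$ are the paper's $s,q$), with the disjoint-support column argument corresponding to the paper's extraction of an $n_j\times n_j$ identity submatrix. The only cosmetic difference is that the paper uses the single polynomial $\det(\bar A^T\bar A)$ rather than an individual minor, and your observation that reflectivity is not needed here is correct and consistent with the paper's proof.
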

\begin{proof}
	In this proof we denote members of the vector space $\V_{rj}(G)$ by $\bar  A$. We identify this vector space  with $\RR^\ell $ for some $\ell$ via a linear isomorphism $\bar A: \RR^{\ell} \to \V_{rj}(G) $, and define a multivariate polynomial $p: \RR^{\ell} \to \RR$ by
	$$p(x)=\text{det} \left(\bar{A}^T(x)\bar{A}(x) \right) $$
	Note that $p(x)=0$ if and only if $\bar A(x)$ has full rank. Thus due to Lemma~\ref{l:variety} it is sufficient to show that $p$ isn't identically zero, or equivalently, establish the existence of a full rank matrix in $\V_{rj}(G)$. We now construct such a matrix which we will denote by $\hat A$.
	
	Note that $\hat A \in \V_{rj}$ if and only if
	\begin{equation} \label{e:constraints}
	\hat A_{sq}=\hat A_{\sigma(s) \sigma (q)}, \text{ for all } s \in I_r, q \in I_j, \sigma \in G.
	\end{equation}
	This means that the values of $\hat A$ are required to be constant along the orbits of the action of the group $G$ on $I_r \times I_j $ defined by
	$$(\sigma,(s,q)) \mapsto (\sigma(s),\sigma(q)). $$
	We choose some orbit $[(s,q)] $ of this action and define $\hat A$ by the requirement that $\hat A_{ij}=1 $ if $(i,j) $ are member of this orbit, and otherwise $\hat A_{ij}=0 $. By construction $\hat A \in \V_{rj}(G)$ and it remains to verify that it has full rank. The orbit $[(s,q)]$ has $n_r=|G|$ elements $(s_1,q_1),\ldots,(s_{n_r},q_{n_r}) $ where the $s_i$ are all distinct, and we can order the orbit so that the first $n_j$ elements of the sequence $q_i $ are distinct as well. Thus 
	 \begin{equation*}
	 (\hat A_{s_i,q_\ell})_{i,\ell=1}^{n_j}=I_{n_j}
	 \end{equation*}
	 and so $\rank \hat A=n_j $.
	 \end{proof}
\section{Almost all or nothing}\label{sec:allOrNothing}
In this section we prove Theorem~\ref{th:allOrNothing}; we show that for any symmetry group $G$, either the DS relaxation is never convex exact for any $A \in \A(G)$, or the DS relaxation is convex exact for almost every $A \in \A(G) $. We then explain how generic convex exactness can be established/refuted for a given group $G$.

Our proof uses another notion of exactness which we will call \emph{affine exactness}:
The affine automorphisms of a graph $A$ are the members of the affine set
$$\affAut(A)=\{S| \quad S \one=\one, \quad \one^TS=\one^T,S \in \N(\Aut(A)), AS=SA  \}. $$
We note that affine automorphisms and convex automorphisms differ in two aspects: On the one hand the entries of convex automorphisms are required to be non-negative while the entries of affine automorphisms are not.  On the other hand,  affine automorphisms must be members of $\N(\Aut(A)) $, a requirement we do not impose on convex automorphisms (although by Theorem~\ref{th:weakExactness} convex automorphisms will "usually" satisfy this property).   

We say that affine exactness holds at $A$ if 
$$\affAut(A)=\aff \Aut(A). $$

 We begin by establishing a  connection between affine exactness and convex exactness:

\begin{proposition}
For any graph $A\in \A(G)$, convex exactness holds at $A$ if and only if affine exactness holds at $A$ and 
\begin{equation} \label{e:weird_condition}
\aff \Aut(A) \cap \{S \geq 0 \}=\conv \Aut(A).
\end{equation}
\end{proposition}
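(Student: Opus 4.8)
The plan is to prove the two implications separately, organized around the elementary identity
$$\affAut(A)\cap\{S\geq 0\}=\CAut(A)\cap\N(\Aut(A)),$$
which holds because the linear constraints defining $\affAut(A)$ together with $S\geq 0$ are exactly the constraints of $\CAut(A)$ supplemented by the support condition $S\in\N(\Aut(A))$. I will also use two always-true inclusions: $\conv\Aut(A)\subseteq\CAut(A)$, which is \eqref{e:convSubseteq}, and $\conv\Aut(A)\subseteq\N(\Aut(A))$, the latter because every $P\in\Aut(A)$ vanishes wherever all automorphisms vanish and $\N(\Aut(A))$ is a linear subspace closed under convex combinations. Combining these gives $\conv\Aut(A)\subseteq\CAut(A)\cap\N(\Aut(A))=\affAut(A)\cap\{S\geq 0\}\subseteq\aff\Aut(A)\cap\{S\geq 0\}$, a chain I will invoke repeatedly.

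For the forward direction, assume convex exactness, $\CAut(A)=\conv\Aut(A)$. Condition \eqref{e:weird_condition} is then immediate: any $S\in\aff\Aut(A)\cap\{S\geq 0\}$ lies in $\affAut(A)\cap\{S\geq 0\}=\CAut(A)\cap\N(\Aut(A))\subseteq\CAut(A)=\conv\Aut(A)$, and the reverse inclusion is the always-true chain above. The substantive step is affine exactness $\affAut(A)=\aff\Aut(A)$; since $\aff\Aut(A)\subseteq\affAut(A)$ always, only the reverse inclusion needs work, and here I would exploit the centroid $\Sc=\frac{1}{|\Aut(A)|}\sum_{P\in\Aut(A)}P$. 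This is a convex automorphism that is strictly positive on exactly the support of $\N(\Aut(A))$ and zero off it. Given any $T\in\affAut(A)$, both $\Sc$ and $T$ lie in $\N(\Aut(A))$, so for small $\eps>0$ the affine combination $\Sc+\eps(T-\Sc)$ is nonnegative: off the support both matrices vanish, and on the support the strictly positive entries of $\Sc$ dominate. Hence $\Sc+\eps(T-\Sc)\in\affAut(A)\cap\{S\geq 0\}=\conv\Aut(A)\subseteq\aff\Aut(A)$, and since $\Sc\in\aff\Aut(A)$ too, $T=\Sc+\eps^{-1}\big((\Sc+\eps(T-\Sc))-\Sc\big)$ is an affine combination of two members of $\aff\Aut(A)$, so $T\in\aff\Aut(A)$. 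This is the clean part of the argument.

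For the reverse direction, assume affine exactness and \eqref{e:weird_condition}. Substituting $\aff\Aut(A)$ for $\affAut(A)$ in the key identity and then applying \eqref{e:weird_condition} yields
$$\CAut(A)\cap\N(\Aut(A))=\aff\Aut(A)\cap\{S\geq 0\}=\conv\Aut(A).$$
Thus, under these two hypotheses, convex exactness $\CAut(A)=\conv\Aut(A)$ is equivalent to the single remaining inclusion $\CAut(A)\subseteq\N(\Aut(A))$. This is the main obstacle, and it is genuinely the whole content of the reverse direction: affine exactness and \eqref{e:weird_condition} both live inside the subspace $\N(\Aut(A))$, so they control only the part of $\CAut(A)$ already supported on the automorphism pattern and cannot by themselves bound convex automorphisms carrying mass outside that pattern. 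I would therefore close the reverse implication by supplying $\CAut(A)\subseteq\N(\Aut(A))$ from the weak-exactness machinery of Section~\ref{sec:weak}: by Theorem~\ref{th:weakExactness}, discriminativeness of $s(A)=A\one$ forces every convex automorphism into $\N(\Aut(A))$, after which the displayed equation gives $\CAut(A)=\conv\Aut(A)$.

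The point to flag, and the one genuinely delicate step, is precisely this inclusion $\CAut(A)\subseteq\N(\Aut(A))$. It is the place where the individual graph $A$ matters rather than just the affine geometry of $\Aut(A)$, and it is where the proposition is most naturally read in tandem with Theorem~\ref{th:weakExactness}, which guarantees weak exactness whenever $s(A)$ is discriminative and hence for all but a measure-zero set of $A\in\A(G)$. I would make this dependence explicit in the write-up so that the reverse implication is carried by the cited weak-exactness result rather than left implicit.
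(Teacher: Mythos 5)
Your forward direction is essentially the paper's own proof: the same chain of inclusions establishes \eqref{e:weird_condition}, and the same centroid argument establishes affine exactness --- the paper likewise perturbs $\Sc$ toward a given affine automorphism $S$, uses strict positivity of $\Sc$ on the support pattern of $\N(\Aut(A))$ to get a doubly stochastic $S_1=(1-\epsilon)\Sc+\epsilon S$, applies convex exactness to $S_1$, and recovers $S$ as an affine combination of $\Sc$ and $S_1$ (the paper writes that final combination with the coefficients of $\Sc$ and $S_1$ interchanged; your formula is the correct one).

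On the reverse direction you diverge from the paper, and you are right to. The paper's proof is the single chain $\conv\Aut(A)\subseteq\CAut(A)\subseteq\affAut(A)\cap\{S\geq 0\}=\aff\Aut(A)\cap\{S\geq 0\}=\conv\Aut(A)$, and its second inclusion is precisely the step you isolated: $\CAut(A)\subseteq\N(\Aut(A))$, i.e.\ weak exactness, which is asserted without justification and does not follow from the hypotheses. Your diagnosis that affine exactness and \eqref{e:weird_condition} only control the part of $\CAut(A)$ supported on the automorphism pattern is confirmed by the paper's own counterexample $A_0$ from Section~\ref{sec:weak}: there $\Aut(A_0)=\{I_3\}$, so $\N(\Aut(A_0))$ is the set of diagonal matrices and $\affAut(A_0)=\{I_3\}=\aff\Aut(A_0)$; hence affine exactness and \eqref{e:weird_condition} both hold, yet $\frac{1}{3}\one\one^T\in\CAut(A_0)$ because $A_0\one=9\one$, so convex exactness fails. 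The proposition is therefore false as literally stated, and your amended version is the correct one: convex exactness holds if and only if weak exactness, affine exactness, and \eqref{e:weird_condition} all hold (noting, as you implicitly do, that convex exactness conversely forces weak exactness, since $\conv\Aut(A)\subseteq\N(\Aut(A))$). Your repair also preserves the downstream application: since weak exactness is generic for every symmetry group by Theorem~\ref{th:weakExactness}, and affine exactness obeys the zero-one dichotomy of Proposition~\ref{prop:affine}, the corrected equivalence still yields Theorem~\ref{th:allOrNothing} --- in the ``almost every'' branch one intersects two full-measure sets, and in the ``never'' branch failure of either \eqref{e:weird_condition} or affine exactness everywhere still rules out convex exactness everywhere. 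Making the weak-exactness hypothesis explicit, as you propose, is exactly the right fix.
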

Note that the RHS of \eqref{e:weird_condition} is always contained in the LHS.
\begin{proof}
	If affine exactness and \eqref{e:weird_condition} hold then
	$$\conv \Aut(A) \subseteq \CAut(A) \subseteq \affAut(A) \cap \{S \geq 0 \}=\aff \Aut (A) \cap \{S \geq 0 \}= \conv \Aut(A). $$
	
	so convex exactness holds as well. 
	
	Now assume convex exactness holds. Then \eqref{e:weird_condition} holds since
	$$\conv \Aut(A) \subseteq \aff \Aut(A) \cap \{S \geq 0 \} \subseteq \CAut(A)=\conv \Aut(A). $$
	
	 To show affine exactness holds we choose some $ S \in \affAut(A) $ and show that $S \in \aff \Aut(A) $.  The "centroid" convex automorphism $\Sc$  is non-zero in all coordinates except for coordinates $(i,j) $ which satisfy $P_{ij}=0 $ for all automorphisms $P$. Since at such coordinates $S$ is also zero,  there  is some $\epsilon>0 $ such that
	$$S_1=(1-\epsilon)\Sc+\epsilon S $$
	is doubly stochastic. $S_1$ is also an affine automorphism, and thus is a convex automorphism. By assumption $S_1$ is a convex combination of members of $\Aut(A) $. In particular $S_1,\Sc \in \aff \Aut(A) $ and therefore so is
	$$S=\frac{1}{\epsilon}\Sc+(1-\frac{1}{\epsilon})S_1. $$
\end{proof}

Note that the condition \eqref{e:weird_condition} depends only on $G$ and not on a specific choice of $A \in \A(G)$. Thus if the condition does not hold then $\DS(A)$ will not be convex exact for any $A \in \A(G) $. If $G$ is such that the condition does hold then convex exactness for specific $A \in \A(G)$ is equivalent to affine exactness. Thus Theorem~\ref{th:allOrNothing} follows from the following proposition: 
\begin{proposition}\label{prop:affine}
Let $G$ be a symmetry group. Then
 either affine recovery holds for almost every $A \in \A(G) $,
 Or affine recovery fails for all $A \in \A(G) $.
\end{proposition}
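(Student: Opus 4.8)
The plan is to reduce affine exactness to a statement about the rank of a matrix that depends polynomially on $A$, and then invoke Lemma~\ref{l:variety}. The key observation is that among the linear constraints defining $\affAut(A)$, only the commutation relation $AS=SA$ actually depends on $A$: for $A \in \A(G)$ we have $\Aut(A)=G$, so the constraints $S\one=\one$, $\one^TS=\one^T$ and $S \in \N(\Aut(A))=\N(G)$ are all determined by $G$ alone. Accordingly I would fix the ambient linear space
$$W=\{T \in \N(G)| \quad T\one=0, \quad \one^TT=0 \}$$
and translate by the identity. Since $I \in \affAut(A)$ for every $A$ (indeed $I \in \N(G)$ because $I \in G$), writing $S=I+T$ identifies $\affAut(A)$ with $I+\ker\Phi_A$, where $\Phi_A\colon W \to \RR^{n\times n}$ is the commutator map $\Phi_A(T)=AT-TA$. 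Likewise $\aff\Aut(A)=\aff G$ is identified with $I+L_0$ for the fixed subspace $L_0=\mathrm{span}\{P-I| \; P \in G\}\subseteq W$.

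Next I would record the two inclusions. For every $A \in \V(G)$ and every $P \in G$ we have $P-I \in W$ and $A(P-I)=(P-I)A$, since $A$ commutes with $G$; hence $L_0 \subseteq \ker\Phi_A$, and therefore $\rank\Phi_A \le \dim W - \dim L_0 =: r^*$ \emph{for all} $A \in \V(G)$. On the other hand, for $A \in \A(G)$, affine exactness is precisely the statement $\ker\Phi_A=L_0$, i.e. $\rank\Phi_A=r^*$. Thus affine exactness at $A$ is equivalent to $\Phi_A$ attaining the maximal possible rank $r^*$.

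Now I would set up the genericity argument. Choosing bases of $W$ and of $\RR^{n\times n}$ together with a linear isomorphism $\V(G)\cong\RR^d$, the map $\Phi_A$ is represented by a matrix $C(A)$ whose entries are linear in the coordinates of $A$; in particular every minor of $C(A)$ is a polynomial on $\RR^d$. The dichotomy then falls out. Either no $A \in \V(G)$ satisfies $\rank\Phi_A=r^*$, in which case, since $r^*$ is a uniform upper bound, affine exactness fails for every $A \in \A(G)$; or some $A_0 \in \V(G)$ has $\rank\Phi_{A_0}=r^*$, in which case some $r^*\times r^*$ minor of $C(A)$ is a polynomial that does not vanish identically. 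By Lemma~\ref{l:variety} its zero set has measure zero, so $\rank\Phi_A=r^*$ for almost every $A \in \V(G)$, that is, affine exactness holds almost everywhere on $\V(G)$. Since $\A(G)$ has full measure in $\V(G)$ (its complement lies in a finite union of proper subspaces $\V(H)$), the same holds almost everywhere on $\A(G)$, which is the first alternative.

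The step to get right is the uniform rank bound $\rank\Phi_A \le r^*$ over all of $\V(G)$: it is exactly what guarantees that "affine exactness for some $A$" coincides with "$\Phi_A$ has maximal rank", and hence that the failure locus is a genuinely proper subvariety rather than all of $\V(G)$. Everything else — the identification of $\affAut(A)$ with $I+\ker\Phi_A$, the polynomial (indeed linear) dependence of the minors on $A$, and the passage from $\V(G)$ to the full-measure subset $\A(G)$ — is routine once this bound is in place.
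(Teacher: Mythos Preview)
Your proposal is correct and follows essentially the same line as the paper's proof: both reduce affine exactness to a full-rank condition on a matrix depending linearly on $A\in\V(G)$, note that $\mathrm{span}\{P-I: P\in G\}$ always lies in the kernel (your uniform bound $\rank\Phi_A\le r^*$), and then invoke Lemma~\ref{l:variety} on the resulting polynomial. The only cosmetic difference is that the paper bundles all constraints into a single system $F(A)\vec(S)=b$ and uses the single polynomial $\det(U_1^TF(\bar A(x))^TF(\bar A(x))U_1)$ with $U_1$ an orthonormal basis of the orthogonal complement of $\mathrm{span}\,\vec(G-I)$, whereas you first strip off the $A$-independent constraints and work with a nonvanishing $r^*\times r^*$ minor of the commutator $\Phi_A$; the content is identical.
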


\begin{proof}[Proof of Proposition~\ref{prop:affine}]
As in the proof of Lemma~\ref{lem:fullRank} we show the set of $A \in \A(G) $ for which affine recovery fails is a null set of a suitable multivariate polynomial.

For given $A \in \A(G) $, the affine automorphisms of $A$ are the matrices $S$ satisfying the affine equations defining $\affAut(A) $. Denoting   the  map which identifies $n \times n $ matrices $S $ with $n^2 \times 1 $ vectors by $\vec(\cdot)$, these equations can be written in the form
\begin{equation}\label{e:vectorized}
F(A) \vec(S)=b 
\end{equation}
where $F(A) $ depends linearly on $A$. Since $A \in \A(G) $, all members of $\vec(G) $ are solutions of \eqref{e:vectorized}. Thus  the kernel of $F(A)$ always includes
$$W=\mathrm{span}\left( \vec (G-I_n) \right) $$ 
and affine exactness holds iff $\mathrm{Ker}(F(A))=W $. Let 
$$U=[U_0, U_1] $$
be a unitary matrix, such that $U_0 $ forms an orthonormal basis of $W$. Then affine exactness holds iff  $F(A)U_1$ has full rank. Now pick some linear isometry $x \mapsto \bar A(x) $ from $\RR^\ell $ to the vector space $\V(G) $. affine exactness holds at $\bar A(x) $ iff $x$ is not a zero of the multivariate polynomial
$$p(x)=\det\left(U_1^TF^T(\bar A(x))F(\bar A(x)U_1) \right).$$ 
This concludes the proof of the proposition, due to Lemma~\ref{l:variety}.
\end{proof}

\paragraph{Checking exactness for given groups} We now explain how we check whether convex exactness holds generically for the groups $G_i, i=1,\ldots,9 $ defined by the shapes in Figure~\ref{fig:failures}. We first note that condition~\eqref{e:weird_condition} holds for these groups. This is because $G_i $ all contain a full orbit. If $G$ has a full orbit $[\a_k] $ and $S$ is an affine combination of members of $G$, then the coefficients of the affine combination are just the values of the column $S_k$. In particular if $S$ is doubly stochastic then the affine combination is in fact a convex combination since $S_k$ is a probability vector.

Since \eqref{e:weird_condition} holds we have generic convex exactness for $G_i$ if and only if the polynomial $p=p(G_i)$ is a non-zero polynomial. This can be checked either by computing the polynomial symbolically or by evaluating it on random input. We used the latter method: For each of the groups $G_i$ we generated $100$ random graphs in $\V(G_i)$ and evaluated the polynomial on these graphs. For groups $G_1,\ldots,G_3 $ all $100$ evaluations of the polynomial were zero, and for the remaining graphs the polynomial was found to be non-zero at all $100$ evaluated points. These results are summarized in the first column of Table~\ref{tab:failures}.  

\section{Centroid}\label{sec:centroid} 
 Recall that the centroid solution $\Sc$ is the matrix obtained by averaging over all members of $G$ as defined in \ref{e:maxEntropy}. In this section we show that for any symmetry group $G$ and almost every $\A \in \A(G)$ the centroid solution $\Sc$ can be recovered efficiently. We also show that $\Sc$ is the solution which will be obtained by interior-point methods when solving $\DS(A)$. 

We begin by giving an explicit construction of $\Sc$. Let us denote as before the equivalence classes of the action of $G$ on $\a$ by $I_1,\ldots,I_k $. Assume that the vertices are arranged so that 
$$I_1=\{\a_1,\ldots,\a_{m_1} \}, \quad I_2=\{\a_{m_1+1},\ldots,\a_{m_2} \},\quad \ldots, \quad I_k=\{\a_{m_{k-1}+1},\ldots,\a_{m_k} \}. $$
Set $n_j=|I_j|, j=1,\ldots,k $, and for any integer $p$ let $J_p \in \RR^{p \times p}$ be the constant matrix whose entries are all $p^{-1} $. Note that $\Sc$ is in $\N(G)$ and is invariant under multiplication from the left or right by elements of $G$. The only doubly stochastic matrix satisfying these properties is
\begin{equation*}
S_0=\begin{bmatrix}
J_{n_1} &         &        &     \\
        & J_{n_2} &        &     \\
        &         & \ddots &     \\
        &         &        & J_{n_k}
\end{bmatrix} 
\end{equation*}
and therefore $\Sc=S_0 $. Recall that for any symmetry group $G$ and almost every $A \in \A(G) $ the vector $s(A)$ is discriminative. In this case the centroid solution $\Sc=S_0$ can be easily computed without even solving the DS relaxation: We first construct a matrix $S$ by setting $S_{ij}=1 $ if $s_i(A)=s_j(A) $ and $S_{ij}=0 $ otherwise. We can then obtain $\Sc$ by normalizing the rows of $S$.

\paragraph{Interior point algorithms}
Interior point algorithms solve \eqref{e:ds} by solving problems of the form
\begin{equation} \label{e:penalty}
\min_{S \in DS} E_{\alpha} \fnorm{AS-SA}^2+\alpha F(S)
\end{equation}
and taking $\alpha \rightarrow 0 $ to obtain a solution for \eqref{e:ds}. The function $F$ is chosen so that it explodes at the boundary, and so the constraints $S \geq 0 $ will never be active in \eqref{e:penalty}. A common choice \cite{wright1999numerical} for $F$ is $F(S)=-\sum_{ij} \log(S_{ij}) $. Specialized solvers for \eqref{e:ds} such as \cite{rangarajan1996novel,Justin} often use $F(S)=\sum_{ij}S_{ij}\log S_{ij} $. Note that while this $F$ does not explode at the boundary, its derivatives do.  

To include both choices of $F$, is well as other possible choices, we will deal with general $F$ which are of the form
$$F(S)=\sum_{ij}f(S_{ij}) $$
where $f:\RR_{\geq 0} \to \RR \cup \{\infty\}$ is continuous and strictly convex and $f(t)<\infty $ if $t>0 $.  

\begin{theorem}
	Let $G$ be a symmetry group, and $F$ be a function satisfying the conditions described previously. Then for almost every $A \in \A(G) $
	 the unique minimizers $S^*_\alpha $ of \eqref{e:penalty}
		converge to $\Sc$ as $\alpha $ tends to zero.
	\end{theorem}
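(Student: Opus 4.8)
The plan is to combine a symmetrization argument with a barrier-type convergence argument, leveraging the genericity of weak exactness. The decisive point is that each minimizer $S^*_\alpha$ is itself $G$-invariant for every fixed $\alpha>0$, which lets us identify the limit as $\Sc$ without invoking the delicate ``analytic center of the optimal face'' machinery. First I would confirm that $S^*_\alpha$ is well defined: the term $E(S)=\fnorm{AS-SA}^2$ is convex, while $\alpha F(S)=\alpha\sum_{ij}f(S_{ij})$ is strictly convex (a sum of strictly convex functions of disjoint coordinates), so the penalized objective $E_\alpha$ is strictly convex; since $F$ is finite and continuous at the interior point $\tfrac1n\one\one^T$, the objective is proper and lower semicontinuous on the compact convex set $\DS$ and hence attains a unique minimizer.

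The key step is symmetrization. For $P,Q\in G$ we have $A(PSQ)-(PSQ)A=P(AS-SA)Q$ since $P,Q$ commute with $A$ (they are automorphisms), and as permutation matrices are orthogonal this gives $E(PSQ)=E(S)$; moreover $F(PSQ)=F(S)$ because $P,Q$ only permute the entries of $S$. Thus $E_\alpha$ is invariant under $S\mapsto PSQ$, so $PS^*_\alpha Q\in\DS$ is again a minimizer and, by uniqueness, $PS^*_\alpha Q=S^*_\alpha$ for all $P,Q\in G$. Taking $Q=I$ or $P=I$ shows that $S^*_\alpha$ is both left- and right-invariant under $G$, for every $\alpha>0$.

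Next I would show $E(S^*_\alpha)\to 0$ as $\alpha\to0$, so that every subsequential limit lies in $\CAut(A)$. The subtlety here --- the step I expect to be the main obstacle --- is that $F$ may be $+\infty$ everywhere on $\CAut(A)$ (for instance the log barrier, since weak exactness forces a common zero pattern), so one cannot simply compare $S^*_\alpha$ against a point of $\CAut(A)$. Instead I compare against the interior perturbation $S'=(1-t)S_0+t\,\tfrac1n\one\one^T$, whose entries all exceed $t/n$ so that $F(S')<\infty$, and which satisfies $E(S')=C t^2$ with $C=\fnorm{A\tfrac1n\one\one^T-\tfrac1n\one\one^T A}^2$ because $E(S_0)=0$. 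Combining $E_\alpha(S^*_\alpha)\le E_\alpha(S')$ with the uniform lower bound $F\ge n^2\min_{[0,1]}f$ (finite, as $f$ is continuous on $[0,1]$ and finite on $(0,1]$) yields $E(S^*_\alpha)\le C t^2+\alpha\bigl(F(S')-n^2\min_{[0,1]}f\bigr)$; letting $\alpha\to0$ for fixed $t$ and then $t\to0$ forces $E(S^*_\alpha)\to0$.

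Finally I would assemble the pieces using genericity. For almost every $A\in\A(G)$ the vector $s(A)$ is discriminative (Theorem~\ref{th:weakExactness}), so $\DS(A)$ is weakly exact and $\CAut(A)\subseteq\N(G)$. Any subsequential limit $\bar S$ of $S^*_\alpha$ is then doubly stochastic, $G$-bi-invariant (a limit of bi-invariant matrices), and lies in $\CAut(A)$ by continuity of $E$, hence in $\N(G)$. By the characterization in Section~\ref{sec:centroid}, the only doubly stochastic matrix that is $G$-bi-invariant and lies in $\N(G)$ is $S_0=\Sc$. Since $\DS$ is compact and every subsequential limit equals $\Sc$, the whole family converges, $S^*_\alpha\to\Sc$, which is the claim.
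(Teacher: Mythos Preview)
Your proof is correct and follows essentially the same route as the paper: both arguments hinge on the observation that the $G$-bi-invariance of $E_\alpha$ together with uniqueness forces each $S^*_\alpha$ to be $G$-bi-invariant, and then combine weak exactness with the characterization of $\Sc$ as the unique doubly stochastic, $G$-bi-invariant matrix in $\N(G)$. The only difference is that you supply more detail on existence/uniqueness of $S^*_\alpha$ and on the step $E(S^*_\alpha)\to 0$ (via the interior perturbation $(1-t)S_0+t\,\tfrac1n\one\one^T$), whereas the paper simply asserts the latter can be shown using the lower bound on $F$.
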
  
\begin{proof}
We assume that $\DS(A)$ is weakly exact. This assumption holds for almost every $A \in \A(G) $. By passing to a subsequence we can assume that $S_\alpha^*$ converges to some $S^*$ in the compact set $\DS$. We need to show that $S^*=\Sc $.

We note that for any $S \in \DS,P \in G,\alpha > 0 $
$$E_\alpha(SP)=E_\alpha(S)=E_\alpha(PS). $$
Since $S_\alpha^*$ is the unique minimizer of $E_\alpha$ this equality implies that $S_\alpha^*$ is invariant under multiplication by elements of $G$ from the right and the left. Thus this is true for $S^*$ as well. Due to continuity $F $ is bounded from below and so it can be shown that 
$$\fnorm{AS^*-S^*A}^2=\lim_{\alpha \rightarrow 0} \fnorm{AS_\alpha^*-S_\alpha^*A}^2=0.$$
It follows that $S^* $ is a convex automorphism and since $\DS(A)$ is weakly exact $S^* \in \N(G) $. Since we also showed $S^*$ to be invariant under multiplication by $G $ from the left and right it follows that $S^*=\Sc $.

\end{proof}

\section{Retrieving isomorphisms}\label{sec:iso}

\begin{wrapfigure}[18]{R}{0.4\textwidth}\vspace{-0.4cm}
	\centering
	\includegraphics[width=0.4\textwidth]{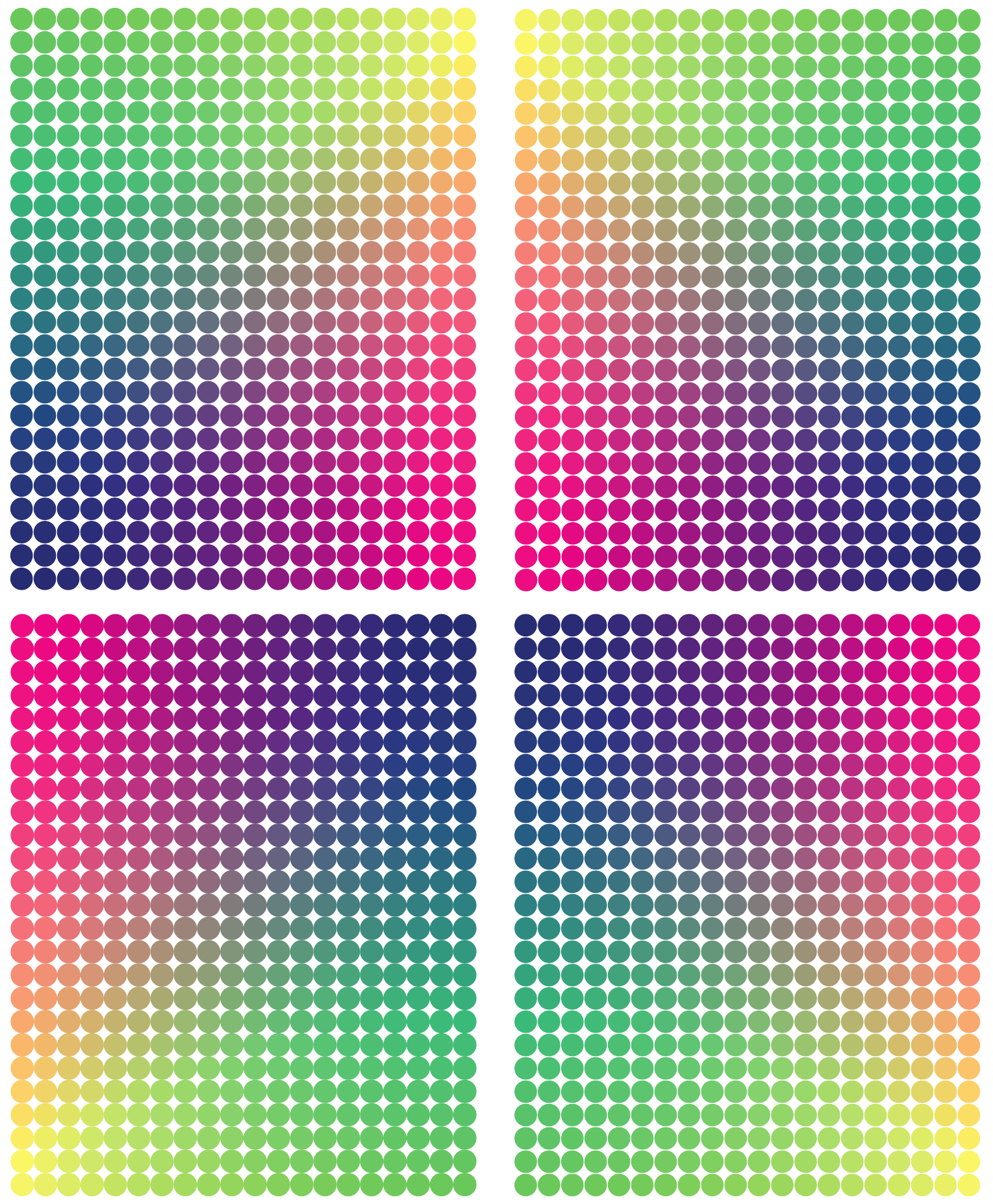}
	\caption{\small Efficeint retrieval of all symmetries of a grid. }\label{fig:rectangle}
\end{wrapfigure}
In this section we discuss how convex exactness can be used to retrieve isomorphisms. We will discuss two classes of methods. The first class searches for extreme points of the convex set of convex isomorphisms. We will show that under the assumptions of  Proposition~\ref{prop:deterministic} all isomorphisms of the graphs can be retrieved quite efficiently. However finding extreme points is not a stable methods for retrieving isomorphisms once noise is introduced. This leads to the second class of methods, which we call projection methods. Projection methods are the methods typically used in practice to achieve a permutation solution from the original solution of the DS relaxation. We show theoretically that the popular "convex to concave" projection method is able to retrieve a correct isomorphism, and explore experimentally the behavior of this method as well as the $L_2$ projection method when noise is introduced.  

\subsection{Finding extreme points}
When convex exactness holds, finding an isomorphism is reduced to the problem of finding an extreme point of the optimal set $\CIso(A,B) $ defined by the linear constraints
\begin{align*}
AS=SB\\
S \in \DS.
\end{align*}
An extreme point(=basic feasible solution) of this linear feasibility program can be found using the simplex algorithm. Extreme points can also be found using interior point algorithms by optimizing a random linear energy over $\CIso(A,B) $. In \cite{PMexact} a similar problem is discussed, and it is shown that if the linear energies are randomly drawn from the uniform distribution on $S^{n^2-1} $, then with probability one the obtained linear program will have a unique solution, which will be an extreme point. Moreover  all extreme points will be obtained with equal probability.

\begin{wraptable}[11]{R}{0.3\textwidth}
	\centering
	\includegraphics[width=0.3\textwidth]{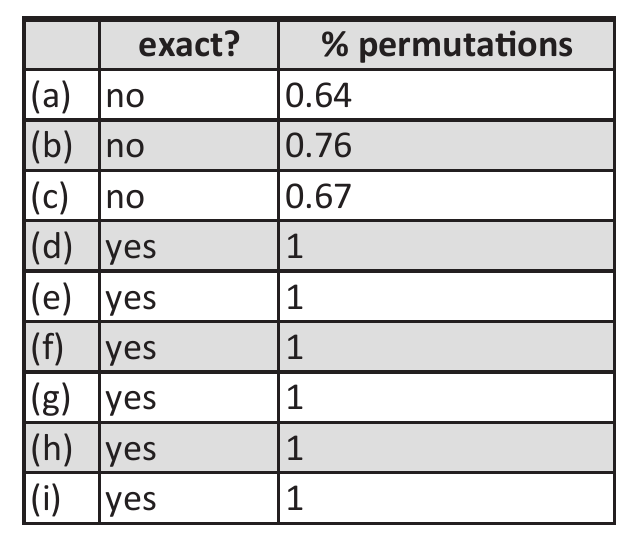}
	\vspace{-0.7 cm}
	\caption{}
	\label{tab:failures}
	\vspace{0 cm}
\end{wraptable}
Table~\ref{tab:failures} shows the successfulness of the latter method in returning isomorphisms for symmetric problems in which convex exactness holds. For each of the nine symmetry groups $G_i $ defined by the shapes in Figure~\ref{fig:failures} we generated $100$ random graphs in $\V(G_i)$ according to the distribution $\mu_{G_i}$. For each such graph we then found an extreme point by maximizing a random linear energy over the optimal set. As shown in Table~\ref{tab:failures} for the graphs $G_i, i>3 $ for which convex exactness holds generically, this algorithm succeeded in returning a permutation in all $100$ experiments. For the groups $G_i, i=1,2,3 $ for which convex exactness does not hold, this algorithm returned permutations in more than half of the experiments, but non-integer solutions were also obtained. This is due to the fact that the optimal set contains non-integer extreme points in this case.

Next we suggest a more efficient method for obtaining all extreme points of the set of convex isomorphisms, under the assumption that the assumptions of Proposition~\ref{prop:deterministic} hold and $k=|\Iso(A)| $ is not too large $k<<n $.

 If $s(A) $ is discriminative, then the centroid solution $\Sc $ can  be found directly as described in the previous section.

Once a convex isomorphism $S=\Sc$ was found, we use the technique of \cite{pataki1996cone} to find an extreme point. We now describe this technique:

We begin with some preliminaries:
For $S,T \in \RR^{n \times n} $, we say that $S \preceq T $ if $S_{ij}=0 $ whenever $T_{ij}=0 $. We say that $S \prec T $ if $S \preceq T $ but the converse inequality $T \preceq S$ does not hold.

 A \emph{face} of a convex set $K$ is a subset $F \subseteq K $ such that for all $ x,y \in K $ and $t \in (0,1) $ satisfying
$$(1-t)x+ty \in F $$ 
necessarily $x,y \in F $. An \emph{extreme point} is a face which is a singleton. If $K $ is a convex compact set then it is the convex hull of its extreme points $E$. Moreover, for each face $F \subseteq K$,
\begin{equation}\label{e:faceExtreme}
F=\conv(E \cap F).
\end{equation}

Every $S \in \CIso(A,B)$ defines a face
$$F(S)=\{Q \in \CIso(A,B)| \quad Q \preceq S \} $$
and an affine space obtained from $F(S) $ by removing the positivity constraints, i.e.,
$$V(S)=\{R| \quad R \one =\one, \quad R^T \one =\one, AR=RB, R \preceq S \}. $$
We note that $S $ is in the relative interior of $F(S) \subseteq V(S) $. This means that for all $R \in V(S) $ there is a sufficiently small $t>0 $ such that $(1-t)S+tR \in F(S) $. 
The boundary of $F(S) $ in $V(S) $ is the set: 
$$\partial F(S)=\{Q \in F(S)| \quad Q \prec S\}. $$
We can now describe the algorithm of \cite{pataki1996cone}: 

\begin{enumerate}
\item We are given as input some $S \in \CIso(A,B) $ and set $r=0 $ and $S_r=S$.
\item We compute a spanning subset to the affine space $V(S_r) $. If $V(S_r)=\{S_r \} $ then $S_r$ is an extreme point and we are done. 
\item Otherwise  we  choose some $R_r \neq S_r $ in $V(S_r) $. We then find the unique $t>0 $ such that 
$$(1-t)S_r+tR_r \in \partial F(S) $$
and set $S_{r+1} $ to be the matrix on the left hand side. We then return to the previous step.
\end{enumerate}
 The iterative process can only terminate when $V(S_r)=\{S_r\}$. This will necessarily occur after a finite number of steps since
$S_{r+1} $ always has more zeros than $S_r $. In the convex exact case, a permutation will be attained within  $k=|\Iso(A,B)| $ steps. This is because each face  $F(S_{r+1})$ is strictly contained in the former face $F(S_r) $ and therefore according to \eqref{e:faceExtreme} the number of extreme points=permutations in $F(S_{r+1}) $ is strictly smaller than the number of extreme points in $F(S_r) $. 

Once a permutation $P(1) $ is obtained, an additional permutation can be sought for by repeating the process above, but beginning with $S_0^1=(1-t)S_0+tP(1)$ where $t<0$ is the smallest possible so that $S_0(t) $ is doubly stochastic. This choice gives an initial convex isomorphism such that $P(1) \not \in F(S_0) $, guaranteeing that the algorithm will return a new permutation $P(2)$. In the next step we can set $S_0^2=(1-t)S_0^1+tP(2) $ and continue in this manner until we obtain a collection of isomorphisms $P(1),\ldots,P(L) $, and $\Sc $ is a convex combination of these isomorphisms. In fact under the full orbit assumption $P(1),\ldots, P(L) $   will be all the isomorphisms. This is because $\Sc $  can be written as a positive convex combination of all members of $\Iso(A,B) $, and the members of $\Iso(A,B) $ are linearly independent, implying that this is the only possible convex combination giving $S$, so that that all isomorphisms were obtained. The linear independence of $\Iso(A,B)$ follows from the fact that it has  full orbit, and so each isomorphism has a non-zero coordinate $i,j $ on which all other isomorphisms vanish.

From a computational perspective, under the conditions of Proposition~\ref{prop:deterministic},  The algorithm above will return an isomorphism within $k$ steps, and all isomorphisms within $O(k^2) $ iterations. Computing the first affine space $V(\Sc) $ is basically the problem of finding a linear basis to the solution set of the linear equations defining $V(\Sc)$. Since $\Sc $ has at most $nk$ non-zero entries, this is a linear equation in $O(n)$ variables instead of $n^2$ variables. For finding the subsequent affine spaces $V(S_r) $ additional computational saving can be obtained due to the fact that $V(S_r) \subseteq V(\Sc) $. Thus all elements in $V(S_r)$ are affine combinations of $k+1 $ spanning element of the affine space $V(\Sc)$, so $V(S_r) $ is obtained by solving a linear equation in only $k+1 $ variables.

Figure~\ref{fig:rectangle} shows the results of applying the algorithm described above to find the symmetries of a $20 \times 25 $ grid. The grid has a reflective symmetry group with full orbit and thus fulfills the conditions of Theorem~\ref{th:main}. We took $A=B$ to be the Euclidean distance matrix of the grid (here $n=500 $) and used the algorithm described above to obtain all symmetries of the grid. In our implementation in Matlab this calculation took around ten seconds.

\subsection{Projection methods}
The classical approach \cite{Aflalo}  for projecting a permutation solution from the doubly stochastic relaxation is using the standard $L_2$ projection, which can be implemented as a linear program and solved efficiently using the Hungarian algorithm. See \cite{zaslavskiy2009path} for more details. A more accurate and more computationally demanding method is the "convex to concave" method. We will explain this method in the formulation used in the \DSpp~algorithm \cite{DSpp}. Similar suggestions appear in \cite{zaslavskiy2009path,ogier1990neural}. We then prove \DSpp~obtains a permutation solution in the convex exact case (up to some technicalities which will be explained), and  examine the behavior of both projection methods when noise is added.

\paragraph{Convex to concave projection}
The convex to concave method sequentially solves optimization problems of the form
\begin{equation}\label{e:dspp}
\min_{S \in DS}E(S,a)=\norm{AS-SB}_F^2+a(n-\fnorm{S}^2), \quad a \geq 0.
\end{equation}
The strictly concave function
$$g(S)=n-\fnorm{S}^2 $$
is non-negative on DS, and $g(S)=0 $ if and only if $S$ is a permutation. Additionally if $a$ is sufficiently large so that $E(S,a)$ is strictly concave, then the (global and local) minima of \eqref{e:dspp} will necessarily be  permutations since the minima of a strictly concave function on a convex compact set are always extreme points. Thus the global minimum of the relaxed $\eqref{e:dspp} $ and the original quadratic assignment problem are identical. Note however that since \eqref{e:dspp} is not convex computing the global minimum is no longer tractable. 

Building on this observation, the convex to concave method minimizes (locally) a sequence of optimization problems of the form \eqref{e:dspp} on a sequence of choices
$$a_0<a_1< \ldots <a_N $$
and in each step uses the obtained solution $S_i $ as a warm start to the optimization of $E(S,a_{i+1}) $. The first point $a_0 $ is selected so that $E(S,a_0) $ is convex, and the last point is selected so that $E(S,a_N) $ is strictly concave and thus the obtained local minima $S_N $ is guaranteed to be a permutation.

The first point $a_0 $ can be selected to be zero to ensure that $E(S,a_0) $ is convex. However a better selection is $a_0=\lambdaMin $ where $\lambdaMin\geq 0$ is the minimal eigenvalue of the quadratic form
$$S \mapsto \fnorm{AS-SB}^2 $$
when restricted to the subspace
$$\{S| \quad S \one =0, \quad  S^T \one =0 \}. $$
Similarly the last point $a_N $ is selected to be (slightly larger than) the maximal eigenvalue $\lambdaMax$ of the same quadratic form over the same subspace. This choice ensures that $E(S,a_N) $ is (strictly) concave. The remaining points $a_i$ can be uniformly sampled in the interval $[a_0,a_N] $ (for lack of a better strategy). 

Note that if $A $ and $B$ are isomorphic, then for any $a>0$ the global minimizers of $E(S,a) $ are precisely $\Iso(A,B) $ (while for $a=0$ the global minimizers are $\CIso(A,B) $ ). This observation suggests the "convex to concave" method  may be successful in retrieving isomorphisms even for symmetric problems, and possibly could return integer solutions $S_i $ even for $i<N$. We now give a theoretical justification for these observations.

We assume that we obtain each $S_i^*$ from a local minimization algorithm with the following properties:
\begin{enumerate}
	\item Monotonicity: $E(S_i^*,a_i) \leq E(S_{i-1}^*,a_i)  $.
	\item The first-order necessary condition (KKT conditions) for local minima is satisfied at $S_i^* $.
	\item The second-order necessary condition for local minima of $E(\cdot,a_i)$ is satisfied at $S_i^* $. That is
	\begin{equation}\label{e:Hess}
	\vec(S-T)^T H \vec(S-T) \geq 0, \text{ for all } S,T \in F(S_i^*).
	\end{equation}
	Here $H$ is the Hessian of the quadratic form $E(\cdot,a_i) $.
\end{enumerate}
Under these assumptions we prove
\begin{theorem}\label{th:DSpp}
	Assume $A $ and $B $ are isomorphic and the DS relaxation is convex exact at $A$. Assume $S_i^*, i=0,\ldots,N $ satisfy conditions (1)-(3).
	Then if $a_1 $ is sufficiently close to $a_0$
	$$S_i^* \in \Iso(A,B), \text{ for all } i\geq 1. $$
\end{theorem}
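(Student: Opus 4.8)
The plan is to argue by induction on $i$, showing that each local minimizer $S_i^*$ must be an isomorphism, i.e.\ an extreme point of $\DS$ which is also a global minimizer of the $a=0$ energy $\fnorm{AS-SB}^2$. The base case $i=1$ is the delicate part; the inductive step should be comparatively routine once the base case is understood, because for $i \geq 1$ we will already know $S_{i-1}^*$ is a permutation and the concavity is only increasing.

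First I would set up the base case. At $a_0$ the energy $E(\cdot,a_0)$ is convex (by the choice $a_0=\lambdaMin$), so the local minimizer $S_0^*$ is a global minimizer; since $A,B$ are isomorphic this means $S_0^* \in \CIso(A,B)=\conv\Iso(A,B)$ by convex exactness. Now the idea is that as $a$ increases slightly to $a_1$, the strict concavity of the added term $-a\fnorm{S}^2$ will push any KKT point with the required second-order behavior toward the extreme points of the face it lives in. Concretely, I would use the monotonicity assumption $E(S_1^*,a_1)\le E(S_0^*,a_1)$ together with the second-order condition \eqref{e:Hess}: on the face $F(S_1^*)$ the Hessian $H$ of $E(\cdot,a_1)$ is positive semidefinite on the difference directions. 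The Hessian of the $a=0$ part is the convex (PSD) quadratic form $S\mapsto \fnorm{AS-SB}^2$, and the added term contributes $-2a_1 I$ on the relevant subspace. The second-order condition \eqref{e:Hess} therefore forces the restricted $a=0$ quadratic form on $F(S_1^*)$ to dominate $a_1$ times the identity in those directions. If $a_1$ is close enough to $a_0=\lambdaMin$, I would argue this forces the face $F(S_1^*)$ to be contained in a subspace on which $S\mapsto\fnorm{AS-SB}^2$ has no small eigenvalues, and then use convex exactness to conclude $S_1^*$ lies in $\conv\Iso(A,B)$ and is in fact an extreme point of it, hence a genuine permutation in $\Iso(A,B)$.

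The key bridge I expect to need is a statement of the form: if $S^*$ is a KKT point of $E(\cdot,a)$ lying in $\conv\Iso(A,B)$ that satisfies the second-order necessary condition, and $a$ is strictly smaller than the smallest eigenvalue by which the quadratic form $\fnorm{AS-SB}^2$ can fail to separate distinct isomorphisms on any nontrivial face, then $S^*$ must be a vertex, i.e.\ a single isomorphism. For the inductive step $i\to i+1$, I would again invoke monotonicity $E(S_{i+1}^*,a_{i+1})\le E(S_i^*,a_{i+1})$; since $S_i^*\in\Iso(A,B)$ gives $g(S_i^*)=0$ and $\fnorm{AS_i^*-S_i^*B}=0$, we have $E(S_i^*,a_{i+1})=0$, forcing $E(S_{i+1}^*,a_{i+1})\le 0$. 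As both $\fnorm{AS-SB}^2\ge 0$ and $a_{i+1}g(S)\ge 0$ on $\DS$, this immediately yields $\fnorm{AS_{i+1}^*-S_{i+1}^*B}=0$ and $g(S_{i+1}^*)=0$, so $S_{i+1}^*$ is simultaneously a convex isomorphism and a permutation, i.e.\ $S_{i+1}^*\in\Iso(A,B)$.

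The main obstacle will be making the base-case argument quantitative: pinning down exactly how close $a_1$ must be to $a_0$, and converting the second-order condition \eqref{e:Hess} into the conclusion that $S_1^*$ is a vertex rather than merely a point in $\conv\Iso(A,B)$. The subtlety is that at $a_0=\lambdaMin$ the energy is only marginally convex (it has a null direction), so the local minimizer $S_0^*$ need not be unique or integral, and one must track how the emerging negative curvature at $a_1$ interacts with the optimal face. I would handle this by relating the eigenvalues of the restricted quadratic form on faces of $\conv\Iso(A,B)$ to the gap between $\lambdaMin$ and the smallest eigenvalue attained on any edge of the polytope of isomorphisms, and choosing $a_1-a_0$ below that gap. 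The remaining details — verifying the KKT/second-order conditions transfer correctly across the boundary-of-face step and that convex exactness is genuinely used to exclude spurious non-integer optima — should then follow routinely.
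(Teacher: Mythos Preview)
Your inductive step ($i\ge 1 \Rightarrow i+1$) is correct and coincides with the paper's argument: once $S_i^*\in\Iso(A,B)$ one has $E(S_i^*,a_{i+1})=0$, and monotonicity together with nonnegativity of both summands forces $S_{i+1}^*\in\Iso(A,B)$.

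The base case, however, has a genuine gap. Your ``key bridge'' statement assumes $S_1^*\in\conv\Iso(A,B)$, but nothing you write establishes this. Monotonicity gives only
\[
E(S_1^*,a_1)\ \le\ E(S_0^*,a_1)\ =\ a_1\,g(S_0^*),
\]
which is a small \emph{positive} number, not zero; hence you cannot conclude $AS_1^*=S_1^*B$, and convex exactness (which is the equality $\CIso(A,B)=\conv\Iso(A,B)$) cannot be invoked. Likewise, the second-order condition only tells you that on tangent directions of the face $F(S_1^*)$ the $a{=}0$ quadratic form has eigenvalues at least $a_1$; this constrains the \emph{shape} of the face but says nothing about the \emph{location} of $S_1^*$ in $\DS$. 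Your proposed eigenvalue-gap analysis therefore floats free: it never pins $S_1^*$ down near the isomorphism polytope, and without that localization there is no route from the second-order condition to the conclusion $S_1^*\in\Iso(A,B)$.

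The paper supplies exactly this missing localization, but by a different mechanism than an eigenvalue gap. It proves a lemma: there is an open set $U\supseteq\CIso(A,B)$ such that for \emph{every} $a>0$, the only points in $U$ satisfying the first- and second-order necessary conditions for $E(\cdot,a)$ are the members of $\Iso(A,B)$. The set $U$ is built pointwise. Around a non-permutation $S\in\CIso(A,B)$ one takes $U_S=\{Q:S\in F(Q)\}$; then for any $Q\in U_S$ one picks $P\in\Iso(A,B)$ with $P\preceq S$, so $P,S\in F(Q)$, and since both are zeros of the convex $a{=}0$ energy one gets
\[
\vec(P-S)^T H\,\vec(P-S)\ =\ -2a\,\fnorm{P-S}^2\ <\ 0,
\]
violating the second-order condition at $Q$. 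Around a permutation $P\in\Iso(A,B)$ one builds a small neighborhood on which $P-Q$ is a feasible descent direction for $E(\cdot,a)$ (using $\nabla_Q g\cdot(P-Q)<0$ and convexity of $E(\cdot,0)$), so the first-order condition fails at every $Q\neq P$. With $U$ in hand, compactness gives $m=\min_{\DS\setminus U}E(\cdot,0)>0$; choosing $a_1$ small enough that $a_1\,g(S_0^*)<m$ forces $S_1^*\in U$, and the lemma then yields $S_1^*\in\Iso(A,B)$. Note that the smallness of $a_1$ is used only for this localization step, not in the KKT/second-order analysis itself.
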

The theorem is proved in Appendix~\ref{A:convex2concave}.

\paragraph{Isomorphism retrieval for noisy problems}
We examine the behavior of the DS relaxation coupled with the projections described above for noisy symmetric problems 
by conducting the following experiment:

We construct a random bilaterally symmetric graphs $A \in \RR^{n \times n} $ and choose $B=A$. We then perturb these graphs by two randomly selected symmetric matrices $\Delta A, \Delta B $, and solve the \DS~relaxation using both projection methods. We do this for $n=10,30,50$ and for matrices  $\Delta A, \Delta B $ with Frobenius norm $\epsilon=10^\alpha $ where we use ten values of $\alpha$ uniformly chosen from the interval $[-3,0]$. The graph $A$ is  chosen by computing an isometry $L:\RR^k \to \V(G) $ where $G$ is a permutation subgroup with two elements, and then sampling a vector $x \in \RR^k $ uniformly from the unit sphere to obtain $A=L(x) $. For each fixed value of $n,\alpha$ we repeat $100$ different instances of the experiment, and compute the retrieval ratio of both methods, which we define as the number of times the method returned a permutation from $G$ divided by the number of experiments (100). The results are shown in Figure~\ref{fig:noise}. 

It can be seen that both methods succeed in retrieving a correct permutation at low noise levels, but the convex-to-concave method (denoted by \DSpp) is more successful than the $L_2$ projection method (denoted by \DS) at higher noise levels. In the case $n=10$ we also add the "groud truth retrieval ratio", that is the number of instances in which the global minimizer of the graph matching energy was indeed in $G$ divided by the number of experiments. It can be seen that as the noise level approaches $10^0=1 $ the noise "takes over the problem" and the members of $G$ are no longer the global minimizers. The ground truth solutions was obtained by the semi-definite relaxation of \cite{Itay} which is known to be very tight, though computationally expensive. We verify that the solution obtained from the semi-definite relaxation is indeed the correct solution by checking that the difference between the lower bound provided by the relaxation and the upper bound provided by projecting the solution of the relaxation are negligible.

As a side note, we observe that at low noise levels \DSpp~obtains a solution in $G$ after two iterations in accordance with Theorem~\ref{th:DSpp}, and that even at high noise levels a permutation solution is usually attained after four iterations. This indicates that it might be worthwhile to choose a smaller $a_N$, or alternatively to consider less steps in the convex-to-concave process.
\begin{figure}[t]
	\centering
	\includegraphics[width=\columnwidth]{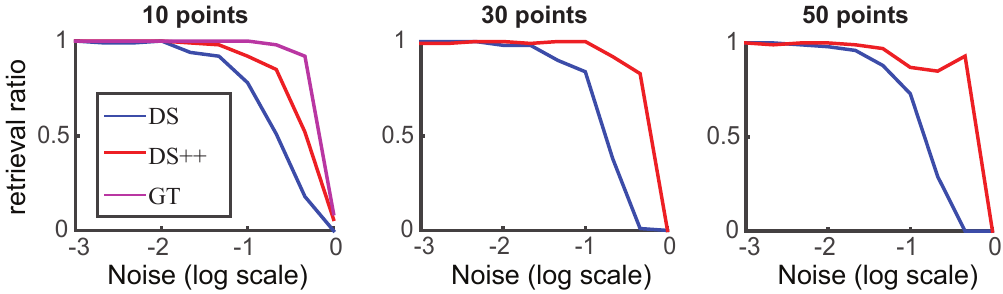}
	\vspace{-0.5cm}
	\caption{\small Evaluation of \DS~ and \DSpp~ accuracy as a function of noise for symmetric graphs. The details are explained in the text.}
	\label{fig:noise}
	\vspace{-0.1 cm}
\end{figure}

\bibliographystyle{apalike}
\bibliography{DSbibfile}

\appendix
\vspace{-5pt}

\section{Convex to concave} \label{A:convex2concave}

\begin{proof}[Proof of Theorem~\ref{th:DSpp}]

	We begin with some preliminaries. First note that if $S_{i-1}^* $ is an isomorphism, then $E(S_i^*,a_i)=0 $ due to the monotonicity condition and thus $S_i^* $ is an isomorphism.	
	
	In the asymmetric case the claim is trivial: Since $E(S,a_0) $ is convex its local minimizers are also global minimizers. Since in the asymmetric case is the unique isomorphism between $A$ and $B$ is the only global minimizer for any $a_0 \geq 0 $ it follows that $S_0^* $ is that unique minimizer. Therefore in this proof we will focus on the symmetric case only. 	
	
	In the symmetric case there are at least two isomorphisms $P_0,P_1 $. Thus $P_1-P_0$ is an eigenvector of the energy $E(S) $ with eigenvalue $\lambdaMin=0$ and so $a_0=0 $. 
	
	Our claim follows easily from the following lemma:
	\begin{lemma}\label{lem:U}
		There exists an open set $U$ containing $\CIso(A,B) $ such that for all $a>0$, The only points satisfying the first and second order conditions for local minimization of $E(S,a) $ are the members of $\Iso(A,B) $.
	\end{lemma}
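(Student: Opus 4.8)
The plan is to establish Lemma~\ref{lem:U} by combining a compactness argument with the convex exactness hypothesis and a second-order analysis of the concave penalty term. First I would observe that any point $S^*$ satisfying the first- and second-order necessary conditions for local minimization of $E(\cdot,a)$ with $a>0$ must in particular satisfy the KKT conditions, and that the second-order condition \eqref{e:Hess} restricts the behavior of $E(\cdot,a)$ on the face $F(S^*)$. Since the Hessian of $E(\cdot,a)$ is $H_0 - 2a\,\I$ (where $H_0$ is the Hessian of the convex part $\fnorm{AS-SB}^2$ and the $-2a\,\I$ comes from $-a\fnorm{S}^2$, restricted to the doubly-stochastic tangent space), the concave perturbation pushes eigenvalues downward, so the second-order condition \eqref{e:Hess} forces $F(S^*)$ to lie inside the subspace where $H_0$ dominates $2a$. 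I would make precise the claim that for $a>0$ this subspace is so constrained that $F(S^*)$ must be a singleton, i.e. $S^*$ is an extreme point of $\DS$, hence a permutation; and then argue that a permutation satisfying the first-order condition must actually be a global minimizer of $E(\cdot,a)$ on $\DS$, which for isomorphic $A,B$ and $a>0$ means $S^*\in\Iso(A,B)$.

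The key steps, in order, would be as follows. \textbf{(Step 1)} Show that the only extreme points of $\DS$ satisfying the first-order condition are the isomorphisms: for a permutation $P$, stationarity of $E(\cdot,a)$ forces $\fnorm{AP-PB}^2$ to be minimal among permutations, which by isomorphism and $a>0$ pins $P\in\Iso(A,B)$. \textbf{(Step 2)} Use convex exactness, $\conv\Iso(A,B)=\CIso(A,B)$, to control the geometry near the optimal set: any convex isomorphism is a convex combination of genuine isomorphisms, so the set $\CIso(A,B)$ has no superfluous non-permutation extreme points. \textbf{(Step 3)} Carry out the second-order argument: suppose $S^*$ is not a permutation, so $F(S^*)$ contains a nontrivial segment through $S^*$; choose a direction $\vec(S-T)$ within $F(S^*)$ and evaluate the quadratic form \eqref{e:Hess}. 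Because $E(S)=\fnorm{AS-SB}^2$ vanishes on $\CIso(A,B)$ and is convex, its Hessian is positive semidefinite but has a kernel containing $\aff\Iso(A,B)-\Iso(A,B)$; the penalty contributes a strictly negative $-2a$ in every direction of the tangent space. I would show that one can select a feasible direction in $F(S^*)$ along which the convex part contributes zero (or strictly less than $2a$) while the concave part contributes $-2a<0$, violating \eqref{e:Hess}. This contradiction forces $S^*$ to be extreme, reducing to Steps 1--2.

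The main obstacle I anticipate is Step 3, specifically producing a feasible second-order descent direction that stays inside the face $F(S^*)$ while lying in (or close enough to) the kernel of the convex Hessian. The subtlety is that directions in $\ker H_0$ correspond to displacements within $\aff\Iso(A,B)$, and one must verify that at a non-permutation $S^*$ such a direction is genuinely available as a chord $S-T$ with $S,T\in F(S^*)$ — this is exactly where convex exactness is essential, since it guarantees $S^*$ lies in the relative interior of a face spanned by actual permutations, so the face contains the full affine span of those permutations and hence a zero-curvature direction of the convex part. A secondary technical point is handling the boundary case where the convex Hessian's smallest nonzero eigenvalue on the relevant subspace is comparable to $2a$; this is presumably why the statement of Theorem~\ref{th:DSpp} requires $a_1$ close to $a_0=0$, ensuring the increment stays in the regime where the kernel analysis is valid. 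I would open the proof by fixing an arbitrary $a>0$, defining $U$ as a suitable neighborhood of $\CIso(A,B)$ on which these curvature estimates hold uniformly, and then running the contradiction argument for any candidate local minimizer in $U$.
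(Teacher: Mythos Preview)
Your overall architecture is close to the paper's, but there is a genuine gap in Step~3 that undermines the argument. You claim that convex exactness ``guarantees $S^*$ lies in the relative interior of a face spanned by actual permutations,'' and from this you want to extract a chord in $F(S^*)$ along which the convex Hessian $H_0$ vanishes. That implication is correct only when $S^* \in \CIso(A,B)$. For a point $S^*$ merely \emph{near} $\CIso(A,B)$ --- which is what the lemma must handle, since $U$ is an open neighborhood --- there is no reason the face $F(S^*)$ contains any convex isomorphism at all: a small perturbation of a convex isomorphism can zero out a coordinate that was positive in every isomorphism, and then $F(S^*)$ need not contain any direction in $\ker H_0$. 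The paper resolves this by a specific choice of neighborhood: for each non-permutation convex isomorphism $S$ it sets $U_S = \{Q \in \DS : Q_{ij}>0 \text{ whenever } S_{ij}>0\}$, which forces $S \in F(Q)$ for every $Q \in U_S$. Convex exactness then supplies an isomorphism $P \preceq S \preceq Q$, and the chord $P - S$ lies in $F(Q)$ and is annihilated by $H_0$, so the second-order condition fails for every $a>0$. Your proposal never identifies this support-inclusion construction, and without it Step~3 does not go through off of $\CIso(A,B)$.

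Step~1 also has a flaw: stationarity of $E(\cdot,a)$ at a permutation $P$ does \emph{not} force $P$ to be a global minimizer when $a>0$, since $E(\cdot,a)$ is non-convex and a KKT point at a vertex need not be globally optimal. The paper does not attempt this. Instead, for each isomorphism $P$ it constructs a neighborhood $U_P$ on which, for every $Q \neq P$, the direction $P-Q$ is a strict first-order descent direction for $E(\cdot,a)$ (combining convexity of $E(\cdot,0)$ with a sign argument showing $\nabla_Q g \cdot (P-Q) < 0$); non-isomorphism permutations are simply absent from $U$ by construction. Finally, your remark that the smallness of $a_1$ is needed to handle the case ``where the convex Hessian's smallest nonzero eigenvalue on the relevant subspace is comparable to $2a$'' misreads the logic: the lemma holds uniformly for all $a>0$, and the restriction that $a_1$ be close to $a_0$ in Theorem~\ref{th:DSpp} is used only afterward, via an energy bound, to ensure $S_1^*$ lands in $U$.
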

	To obtain the theorem from the lemma, let $m>0 $ be the minimum of $E(S) $ on the compact set $DS \setminus U $. For any $a_1>0$  sufficiently small so that $a_1 \max_{S \in DS} g(S)<m $ we obtain
	$$E(S_1^*,a_1)\leq E(S_0^*,a_1)=a_1 g(S_0^*)<m \leq \min_{S \in DS \setminus U} E(S,a_1).  $$
	It follows that  $S_1^* \in U $ and since it satisfies the first and second order conditions for local minimization of $E(\cdot,a_1)$ it follows that  $S_1^* \in \Iso(A,B) $.
	
	\paragraph*{Proof of Lemma~\ref{lem:U}.} We construct for each $S \in \CIso(A,B) $ an open set $U_S $ satisfying the properties required from $U$ and then choose 
	$$U =\cup_{S \in \CIso(A,B)} U_S. $$
	If $S$ is a convex isomorphism but not a permutation we choose
	$$U_S=\DS \cap \{Q| \quad Q_{ij}>0 \text{ if } S_{ij}>0  \}=\{Q| \quad S \in F(Q) \}. $$
	Fix some $Q \in U_S $, we claim that the second-order necessary condition for minimizing $E(\cdot,a) $ is not satisfied at $Q$ for any $a>0$. Since $S$ is a convex combination of isomorphisms we can choose an isomorphism $P$ such that $S \succeq P $, and so $P,S \in F(Q) $. Since $P,S$ are both zeros of the convex quadratic form $E(\cdot,0) $, it follows that the second-order condition does not hold since (denoting by $H_g$ the Hessian of $g$ )
	$$\vec (P-S)^TH\vec (P-S)= a \ \vec (P-S)^TH_g\vec (P-S)=-2a  \fnorm{P-S}^2<0.$$   
	
		For isomorphisms $P$ we choose $U_P$ as follows: 
		
		For any $S \in \DS\setminus \perm_n $ the concavity of $g$ implies
		$$\nabla_P^T g(S-P) \geq g(S)-g(P)>0. $$
		In particular this is true for any $S$ in the compact set 
		$$K =\{S \in DS|  \sup_{i,j}\left|P_{ij}-S_{ij}\right|=0.5  \} .$$   
		Since $g$ is $C^1 $ the function
		$$F(Q,S)=\nabla_Q^Tg(S-Q) $$
		is continuous. Thus,  there is a neighborhood $U_P \subseteq \{S \in DS| \sup_{i,j} |P_{ij}-S_{ij} | \leq \frac{1}{4} \} $ of $P$ on which
		\begin{equation}\label{e:nabla}
		\nabla_Q^Tg(S-Q)>0, \quad  \forall(Q,S) \in U_P \times K.
		\end{equation}
		Fix some $Q \in U_P$. Define
		$$t_M=\sup\{t \geq 0|Q(t)=(1-t)P+tQ \in \DS \}.$$
		Note that $\sup_{i,j} \left|P_{ij}-Q_{ij}(t_M)\right|=1 $ and therefore there is some $1<t_0<t_M $ such that $S=Q(t_0) \in K $. It follows from \eqref{e:nabla} that 
		$$0<\nabla_Q^Tg(Q(t_0)-Q)=(1-t_0) \nabla_Q^T g (P-Q)$$
		and therefore
		$$\nabla_Q^T g(P-Q)<0 $$
		The convexity of $E$ implies that for all $Q \in U_P $,
		$$\nabla_Q^T E(P-Q) \leq E(P)-E(Q) \leq 0 .$$
		From the last two equations it follows that for any $a>0$ the energy $E(\cdot,a) $ has a descent direction $P-Q $ at any point $Q \in U_P \setminus \{P\} $. Not that this direction is orthogonal to the gradients of the constraints defining $\DS$, since $Q+t(P-Q) $ is feasible if $|t|$ is small enough. Thus the first-order condition does not hold at $Q$. 
\end{proof}


\end{document}